\DeclareMathOperator{\K}{\mathrm{K}}
\DeclareMathOperator{\F}{\mathsf{F}}
\begin{document}

\newcommand{\Sm}{\mathcal{S}^{\!}\mathsf{m}_k}
\newcommand{\Rings}{\mathcal{R}^{\!}\mathsf{ings}^*}
\newcommand{\rings}{\mathcal{R}^{\!}\mathsf{ings}}
\newcommand{\SmE}[1]{\mathcal{S}^{\!}\mathsf{m}_{#1}}
\newcommand{\Mot}[1]{\mathcal M^{\!}\mathsf{ot}_{#1}}
\newcommand{\Corr}[1]{\mathcal C^{\!}\mathsf{orr}_{#1}}
\newcommand{\MotF}[1]{\mathcal M_{#1}}
\newcommand{\QG}{\mathbb Q\Gamma}
\newcommand{\Inv}{\mathrm{Inv}(\QG)}
\newcommand{\Gal}{\mathrm{Gal}(k^{\mathrm{sep}}/k)}
\newcommand{\End}{\mathrm{End}}
\newcommand{\M}[1]{\mathcal{M}_{#1}}
\newcommand{\EG}{\!\,_EG}
\newcommand{\EGP}{\!\,_E(G/P)}
\newcommand{\EX}{\!\,_EX}
\newcommand{\XG}{\!\,_{\xi}G}
\newcommand{\XGP}{\!\,_{\xi}(G/P)}
\newcommand{\KQ}[1]{\mathrm K(n)^*\big(#1;\,\mathbb Q[v_n^{\pm1}]\big)}
\newcommand{\KZ}[1]{\mathrm K(n)^*\big(#1;\,\mathbb Z_{(p)}[v_n^{\pm1}]\big)}
\newcommand{\KZp}[1]{\mathrm K(n)^*\big(#1;\,\mathbb Z_p[v_n^{\pm1}]\big)}
\newcommand{\KF}[1]{\mathrm K(n)^*\big(#1;\,\mathbb F_p[v_n^{\pm1}]\big)}
\newcommand{\CHQ}[1]{\mathrm{CH}^*\big(#1;\,\mathbb Q[v_n^{\pm1}]\big)}
\newcommand{\KXZ}[1]{\!\,^{\mathrm K(n)\!}#1_{\,\mathbb Z_{(p)}[v_n^{\pm1}]}}
\newcommand{\KMotQ}{\Mot{\,\mathrm K(n)}}
\newcommand{\CHMotQv}{\Mot{\,\mathrm{CH}}}
\newcommand{\KXQ}[1]{\mathcal M_{\mathrm K(n)}(#1)}
\newcommand{\KMQ}{\mathcal M_{\,\mathrm K(n)}}
\newcommand{\CHMQv}{\MotF{\,\mathrm{CH}}}
\newcommand{\KCorrQ}{\Corr{\,\mathrm K(n)}}
\newcommand{\CHCorrQv}{\Corr{\,\mathrm{CH}}}
\newcommand{\CHCorrQ}{\Corr{\,\mathrm{CH}}}
\newcommand{\AMot}{\Mot A}

\newcommand{\e}{\varepsilon}
\newcommand{\con}{\ensuremath{\triangledown}}
\newcommand{\ra}{\ensuremath{\rightarrow}}
\newcommand{\tp}{\ensuremath{\otimes}}
\newcommand{\pr}{\ensuremath{\partial}}
\newcommand{\trigd}{\ensuremath{\triangledown}}
\newcommand{\dAB}{\ensuremath{\Omega_{A/B}}}
\newcommand{\QQ}{\ensuremath{\mathbb{Q}}\xspace}
\newcommand{\CC}{\ensuremath{\mathbb{C}}\xspace}
\newcommand{\RR}{\ensuremath{\mathbb{R}}\xspace}
\newcommand{\ZZ}{\ensuremath{\mathbb{Z}}\xspace}
\newcommand{\Zp}{\ensuremath{\mathbb{Z}_{(p)}}\xspace}
\newcommand{\Z}[1]{\ensuremath{\mathbb{Z}_{(#1)}}\xspace}
\newcommand{\NN}{\ensuremath{\mathbb{N}}\xspace}
\newcommand{\LL}{\ensuremath{\mathbb{L}}\xspace}
\newcommand{\inN}{\ensuremath{\in\mathbb{N}}\xspace}
\newcommand{\inQ}{\ensuremath{\in\mathbb{Q}}\xspace}
\newcommand{\inR}{\ensuremath{\in\mathbb{R}}\xspace}
\newcommand{\inC}{\ensuremath{\in\mathbb{C}}\xspace}
\newcommand{\OO}{\ensuremath{\mathcal{O}}\xspace}
\newcommand{\rarr}{\rightarrow}
\newcommand{\Rarr}{\Rightarrow}
\newcommand{\xrarr}[1]{\xrightarrow{#1}}
\newcommand{\larr}{\leftarrow}
\newcommand{\lrarr}{\leftrightarrows}
\newcommand{\rlarr}{\rightleftarrows}
\newcommand{\rrarr}{\rightrightarrows}
\newcommand{\al}{\alpha}
\newcommand{\bt}{\beta}
\newcommand{\ld}{\lambda}
\newcommand{\om}{\omega}
\newcommand{\Kd}[1]{\ensuremath{\Omega^{#1}}}
\newcommand{\KKd}{\ensuremath{\Omega^2}}
\newcommand{\vd}{\partial}
\newcommand{\PC}{\ensuremath{\mathbb{P}_1(\mathbb{C})}}
\newcommand{\PPC}{\ensuremath{\mathbb{P}_2(\mathbb{C})}}
\newcommand{\derz}{\ensuremath{\frac{\partial}{\partial z}}}
\newcommand{\derw}{\ensuremath{\frac{\partial}{\partial w}}}
\newcommand{\mb}[1]{\ensuremath{\mathbb{#1}}}
\newcommand{\mf}[1]{\ensuremath{\mathfrak{#1}}}
\newcommand{\mc}[1]{\ensuremath{\mathcal{#1}}}
\newcommand{\id}{\ensuremath{\mbox{id}}}
\newcommand{\dd}{\ensuremath{\delta}}
\newcommand{\bu}{\bullet}
\newcommand{\ot}{\otimes}
\newcommand{\boxt}{\boxtimes}
\newcommand{\op}{\oplus}
\newcommand{\mt}{\times}
\newcommand{\Gm}{\mathbb{G}_m}
\newcommand{\Ext}{\ensuremath{\mathrm{Ext}}}
\newcommand{\Tor}{\ensuremath{\mathrm{Tor}}}

\newcommand{\kn}[1]{\mathrm K(n)^*(#1)}
\newcommand{\ckn}[1]{\mathrm{CK}(n)^*(#1)}
\newcommand{\grckn}[1]{\mathrm{gr}_\tau^{*}\,\mathrm{CK}(n)^{*}(#1)}
\newcommand{\so}{\mathrm{SO}_m}
\newcommand{\pt}{\mathrm{pt}}
\newcommand{\sic}{\mathrm{sc}}
\newcommand{\ad}{\mathrm{ad}}
\newcommand{\St}{\mathrm{St}}
\newcommand{\SL}{\mathrm{SL}}
\newcommand{\Sp}{\mathrm{Sp}}
\newcommand{\Spin}{\mathrm{Spin}}

\newcommand{\A}{\mathsf{A}}
\newcommand{\C}{\mathsf{C}}

\newcommand{\GF}{\mathbb{F}}

\makeatletter
\newcommand{\colim@}[2]{%
  \vtop{\m@th\ialign{##\cr
    \hfil$#1\operator@font colim$\hfil\cr
    \noalign{\nointerlineskip\kern1.5\ex@}#2\cr
    \noalign{\nointerlineskip\kern-\ex@}\cr}}%
}
\newcommand{\colim}{%
  \mathop{\mathpalette\colim@{\rightarrowfill@\textstyle}}\nmlimits@
}
\makeatother

\newtheorem{lm}{Lemma}[section]
\newtheorem{lm*}{Lemma}
\newtheorem*{tm*}{Theorem}
\newtheorem*{tms*}{Satz}
\newtheorem{tm}[lm]{Theorem}
\newtheorem{prop}[lm]{Lemma}
\newtheorem*{prop*}{Proposition}
\newtheorem{prob}{Problem}
\newtheorem{cl}[lm]{Corollary}
\newtheorem*{cor*}{Corollary}
\newtheorem{conj}{Conjecture}
\theoremstyle{remark}
\newtheorem*{rk*}{Remark}
\newtheorem*{rm*}{Remark}
\newtheorem{rk}[lm]{Remark}
\newtheorem*{xm}{Example}
\theoremstyle{definition}
\newtheorem{df}{Definition}
\newtheorem*{nt}{Notation}
\newtheorem{Def}[lm]{Definition}
\newtheorem*{Def-intro}{Definition}
\newtheorem{Rk}[lm]{Remark}
\newtheorem{Ex}[lm]{Example}

\theoremstyle{plain}
\newtheorem{Th}[lm]{Theorem}
\newtheorem*{Th*}{Theorem}
\newtheorem*{Th-intro}{Theorem}
\newtheorem{Prop}[lm]{Lemma}
\newtheorem*{Prop*}{Proposition}
\newtheorem{Cr}[lm]{Corollary}
\newtheorem{Lm}[lm]{Lemma}
\newtheorem*{Conj}{Conjecture}
\newtheorem*{BigTh}{Classification of Operations Theorem  (COT)}
\newtheorem*{BigTh-add}{Algebraic Classification of Additive Operations Theorem  (CAOT)}

\newtheorem{maintheorem}{Theorem}
\renewcommand{\themaintheorem}{\Alph{maintheorem}}

\tikzcdset{
arrow style=tikz,
diagrams={>={Straight Barb[scale=0.8]}}
}

\title[Bounded generation of Steinberg groups]{
Bounded generation of Steinberg groups\\ over Dedekind  rings of arithmetic type}

\author{Boris Kunyavski\u\i }
\address{%
Dept. of Mathematics \\
Bar-Ilan University \\
Ramat Gan, Israel
}
\email{kunyav@macs.biu.ac.il}

\author{Andrei Lavrenov }
\address{%
Mathematisches Institut\\
 Universit\"at M\"unchen\\ 
 M\"unchen, Germany
}
\email{avlavrenov@gmail.com}

\author{Eugene Plotkin}
\address{%
Dept. of Mathematics \\
Bar-Ilan University \\
Ramat Gan, Israel
}
 \email{plotkin@macs.biu.ac.il}

\author{Nikolai Vavilov}
\address{%
Dept. of Mathematics and Computer Science \\
St Petersburg State University \\
St Petersburg, Russia
}
\email{nikolai-vavilov@yandex.ru}




\thanks{Research of Boris Kunyavski\u\i , Andrei Lavrenov and Eugene Plotkin was supported by the ISF grant 1994/20.
Nikolai Vavilov was supported by the ``Basis'' Foundation grant
N.\,20-7-1-27-1 ``Higher symbols in algebraic K-theory''. A part of this research was accomplished when Boris Kunyavski\u\i \ was visiting the IHES (Bures-sur-Yvette). Support of these institutions is gratefully acknowledged}

\begin{abstract}
The main 
result of the present paper is bounded elementary generation of the Steinberg groups
$\St(\Phi,R)$ for simply laced root systems $\Phi$
of rank $\ge 2$ and arbitrary Dedekind rings
of arithmetic type. 
Also, we prove bounded generation of
$\St(\Phi,\GF_{q}[t,\,t^{-1}])$ for all root systems
$\Phi$, and bounded generation of
$\St(\Phi,\GF_{q}[t])$ for all root systems $\Phi\neq\mathsf A_1$.

The proofs are based 
on a theorem on bounded elementary generation for the corresponding Chevalley groups, where we provide 
uniform bounds. 

\end{abstract}

\maketitle

\section*{Introduction}

In the present paper, we consider simply-connected Chevalley groups
$G=\mathrm G_{\mathrm{sc}}(\Phi,R)$, 
and
the corresponding Steinberg groups $\St(\Phi,R)$
over Dedekind rings
of arithmetic type.
$G$ is generated by the elementary root unipotents $x_{\alpha}(r)$, $\alpha\in\Phi$, $r\in R$,
and
we are interested in the classical problem of estimating
the width of $G$ 
with respect to the 
generators $x_{\alpha}(r)$. 
The width is defined as the smallest possible $m$ such as every
element of $G$ 
is representable as a product of
$m$ generators $x_{\alpha}(r)$. If there is no such $m$, we
say that the width is infinite.
 If the width is finite, we say that $G$ is {\bf boundedly elementarily generated}.
\par
The problem of bounded generation has attracted considerable attention over
the last 40 years or so. We refer the reader to \cite{KPV}
containing a survey of this long activity as well as some
applications to Kac--Moody groups and model theory.
\par
To make a long story short, given a reduced irreducible root system $\Phi$
of rank $\ge 2$, a Dedekind ring of arithmetic type $R$, and a Chevalley group $G=\mathrm G_{\mathrm{sc}}(\Phi,R)$,
until now there were many cases where it was known that $G$ is boundedly elementarily generated, and two kinds
of general upper estimates for the elementary width of $G$ were available:

$\bullet$ explicit estimates depending on $\Phi$ and the fraction field of $R$;

$\bullet$ estimates depending on $\Phi$ alone in the case $\Phi ={\A}_l$.

Combining the methods of~\cite{KPV} and~\cite{trost2},
we are now able to come up with a complete solution in
the general case:


\begin{maintheorem}[\cite{KPV2}]\label{thm A}
Let\/ $\Phi$ be a reduced irreducible root system of rank $l\geq 2$, 
Then there exists a constant $L=L(\Phi)$, depending on $\Phi$
alone, such that for any Dedekind ring of arithmetic type $R$,
any element in $\mathrm{G}_{\mathrm{sc}}(\Phi,\,R)$ is a product of at most $L$ elementary root unipotents. If the multiplicative group $R^*$ is infinite then the restriction on the rank $l\geq 2$ can be dropped.
\end{maintheorem}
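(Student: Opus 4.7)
The plan is to combine the Tavgen-type rank reduction developed in \cite{KPV} with Trost's \cite{trost2} uniform bounds on the elementary width of $\SL_n$ over Dedekind rings of arithmetic type. The first main step is a reduction to rank-$2$ subsystem subgroups: since every root of an irreducible root system $\Phi$ of rank $\geq 2$ lies in some rank-$2$ subsystem $\Psi\subset\Phi$, Tavgen's principle lets one rewrite any element $g\in\mathrm{G}_{\mathrm{sc}}(\Phi,R)$ as a product of a universally bounded number $N_{1}=N_{1}(\Phi)$ of elements, each lying in some rank-$2$ subsystem subgroup $\mathrm{G}_{\mathrm{sc}}(\Psi,R)$. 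Crucially, one needs a \emph{quantitative} form of this reduction in which $N_{1}$ depends only on $\Phi$ and not on $R$.

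The second main step is the quantitative input from \cite{trost2}: the elementary width of $\SL_{n}(R)$ is bounded by a constant $M(n)$ depending only on $n$, uniformly across all Dedekind rings $R$ of arithmetic type. Applied to the $\mathsf A_{2}$ subsystem subgroups this bound is used directly. For the remaining rank-$2$ types $\mathsf B_{2}=\mathsf C_{2}$ and $\mathsf G_{2}$ one either argues along the same lines as \cite{trost2} for their natural matrix realisations, or embeds the rank-$2$ group into some $\SL_{N}$ via a small representation, applies Trost's theorem inside $\SL_{N}$, and converts the resulting factorisation back into $\Phi$-elementary generators at a universally bounded cost. Composition with Step~1 then yields the constant $L(\Phi)$.

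For the rank-$1$ case under the extra hypothesis that $R^{*}$ is infinite, one invokes the classical bounded elementary generation of $\SL_{2}(R)$ due to Vaserstein, Liehl, Queen and others, and verifies by inspection of their arguments that the resulting widths can be controlled by a single constant independent of $R$ whenever $R$ admits a unit of infinite order; this part fits naturally into the same adelic local-global machinery used by Trost.

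The principal obstacle is to make the rank reduction genuinely uniform in $R$. Earlier Carter--Keller--Morris-type approaches yield bounds that grow with the degree of the number field involved, while the function field cases $R=\GF_{q}[t]$ and $R=\GF_{q}[t,t^{-1}]$ require a parallel treatment because the behaviour of strong approximation and of $R^{*}$ is substantially different. Verifying that Trost's $\SL_{n}$-bounds, the Tavgen reduction constants, and the passage between matrix embeddings and $\Phi$-elementary generators all fit together uniformly over every Dedekind ring of arithmetic type---whether arithmetic or geometric in origin---is the technical heart of the argument.
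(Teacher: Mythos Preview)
Your outline has a genuine gap at the $\mathsf C_{2}$ step. The proposed ``embed $\mathrm{G}_{\mathrm{sc}}(\mathsf C_{2},R)=\Sp(4,R)$ into some $\SL_{N}$, apply Trost inside $\SL_{N}$, and convert back'' does not work: the elementary transvections $e_{ij}(r)\in\SL_{N}(R)$ appearing in Trost's factorisation need not lie in $\Sp(4,R)$ at all, and there is no known bounded procedure to rewrite an $\SL_{N}$-elementary word for a symplectic element as a word in \emph{symplectic} root unipotents. Likewise, ``arguing along the same lines as \cite{trost2}'' for $\Sp(4,R)$ is not a routine extension---this is exactly the content the paper isolates as ``\emph{the only} case that does not follow rightaway'' and defers to the long, separate computation in \cite{KPV2}. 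So the step you pass over in one sentence is in fact the hardest part of the theorem.

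This gap also propagates: by reducing via Tavgen to rank-$2$ subsystems you force $\mathsf C_{2}$ to serve as a building block for every $\mathsf B_{l}$ and $\mathsf C_{l}$, so none of those types are settled until $\mathsf C_{2}$ is. The paper avoids this by a different reduction: for $\mathsf B_{l}$, $\mathsf C_{l}$ with $l\ge 3$ and for $\mathsf G_{2}$ it uses surjective $\K_{1}$-stability and the switching lemmas of \cite{KPV} to reduce directly to $\mathsf A_{2}$ (Lemma~\ref{reduction}), bypassing $\mathsf C_{2}$ entirely. Only $\Phi=\mathsf C_{2}$ itself then needs the special treatment of \cite{KPV2}. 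A smaller but real issue is your rank-$1$ argument: uniform bounded generation of $\SL_{2}(R)$ for $R^{*}$ infinite is \emph{not} obtainable by inspection of the classical Vaserstein/Liehl results, whose bounds depend on the field; it is the recent work of Morgan--Rapinchuk--Sury and Jordan--Zaytman (number case) and \cite{KMR} (function case) that gives the absolute constant used in Theorem~D.
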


An important --- and unexpected! ---
aspect of this result is the existence of {\it explicit uniform\/} bounds in the function case. In the number
case the bounds are also uniform, but if we wish to cover
all $R$ and not just those with infinite multiplicative
group $R^*$, they are not explicit. Note that
for symplectic groups the  result is new even
in the number case.

We will sketch the proof of Theorem \ref{thm A} in Section \ref{Sec_2}. However, since the goal of the present work 
is bounded generation for Steinberg groups, we intentionally do not provide any computational arguments, especially taking into account 
that in  the most tricky case $\Phi=\mathsf C_2$ they are long and tedious. We refer to \cite{KPV2} for all details and discussions of the proof 
of Theorem \ref{thm A}, taking uniform bounded generation for Chevalley groups for granted. Note, however, that for the sake of 
completeness we collect in Theorem E all cases of Theorem A needed for the treatment of Steinberg groups and provide a full proof.

At this point it is natural to ask whether Theorem~A
or maybe its weaker forms can be generalised to
Steinberg groups. This question was explicitly raised
by Alexei Myasnikov at the conference GAGTA 2022. The reason was that bounded  generation 
of Steinberg groups would have important model-theoretic
applications. It  plays a crucial role in such problems as first order rigidity, elementary equivalence of groups, Diophantine theory, and the like. So, having Theorem~A for Chevalley groups 
as a base, one can think about the similar properties for their coverings.

Again, we are interested in the bounded generation in terms of the set
$$ X=\{x_\alpha(r)\mid \alpha\in\Phi,\ r\in R\} $$
\noindent
of elementary generators (which we continue to
denote by the same letter).
\par\smallskip
However, this case turned out to be much more challenging.
Apart from the bounded generation of the Chevalley groups themselves, it depends on the deep results on the finiteness
of the (linear) $\K_2$-functor, and on bunch of other
difficult results of $\K$-theory, such as stability theorem for
$\K_2$, centrality of $\K_2$, etc.
\par
It is not clear how one could get {\it uniform\/} bounds
in this case. Even with the bounds that depend on $R$
so far we could only prove it for the case when
the  root system $\Phi$ is {\it simply-laced\/}, i.e.,
$\Phi=\mathsf A_l,\mathsf D_l,\mathsf E_l$.


\par

\begin{maintheorem}
Let $\Phi$ be a reduced irreducible simply laced root system
of rank $\ge 2$, and let $R$ be a Dedekind ring of arithmetic type.
If $\Phi=\mathsf A_2$ assume additionally that $R^*$ is infinite. Then $\mathrm{St}(\Phi,\,R)$ is boundedly elementarily generated.
\end{maintheorem}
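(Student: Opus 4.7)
The plan is to exploit the universal central extension
$$1 \to K_2(\Phi,R) \to \St(\Phi,R) \to G_{\mathrm{sc}}(\Phi,R) \to 1,$$
which holds because for simply laced $\Phi$ of rank $\ge 2$ over a Dedekind ring of arithmetic type the elementary and simply connected Chevalley groups agree, and $\St(\Phi,R)$ is the universal central extension of $G_{\mathrm{sc}}(\Phi,R)$ with kernel the linear $K_2$ in the sense of Matsumoto. By Theorem~A, the image of any $g\in\St(\Phi,R)$ in $G_{\mathrm{sc}}(\Phi,R)$ is a product of at most $L=L(\Phi)$ elementary root unipotents. Lifting such a word to $\St(\Phi,R)$ through the canonical elementary generators $x_\alpha(r)$, one writes $g=c\cdot\tilde g$ with $\tilde g$ a product of $L$ elementary generators of $\St(\Phi,R)$ and $c\in K_2(\Phi,R)$. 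Since $K_2(\Phi,R)$ is central, the task reduces to proving that every element of $K_2(\Phi,R)$ is a product of a bounded number of elementary generators in $\St(\Phi,R)$.

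The second step is to invoke the finiteness of $K_2(R)$ for Dedekind rings of arithmetic type: for rings of integers of number fields this is Garland's finiteness of the tame kernel, extended to rings of $S$-integers via the localization sequence in algebraic $K$-theory; for coordinate rings of smooth affine curves over finite fields it follows from classical computations of Quillen and Soul\'e. The stability theorem for $K_2$, due to van der Kallen, Stein, Suslin and Tulenbaev, identifies $K_2(\Phi,R)$ with the stable $K_2(R)$ for simply laced $\Phi$ of rank $\ge 2$, so $K_2(\Phi,R)$ itself is finite. The rank restriction is sharp at $\mathsf A_2$: there stability is no longer automatic, and it must be forced by the assumption that $R^*$ be infinite; this is exactly the origin of the extra hypothesis in that case. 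Centrality of $K_2(\Phi,R)$ in $\St(\Phi,R)$, established in the required generality by Tulenbaev and van der Kallen, is what justifies the lifting procedure of the first step.

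Once $K_2(\Phi,R)$ is known to be finite, the proof is finished by a straightforward counting argument. Every element of $\St(\Phi,R)$ differs from a word of length $\le L$ in elementary generators by one of finitely many central elements, each of which is itself some finite word in the $x_\alpha(r)$ (simply because $\St(\Phi,R)$ is by definition generated by those symbols). Taking the maximum length $M$ over this finite set yields the absolute bound $L+M$. Concretely, $K_2(R)$ is generated by Steinberg symbols $\{u,v\}=h_\alpha(uv)\,h_\alpha(u)^{-1}\,h_\alpha(v)^{-1}$, each of which is already a short product of Weyl elements $w_\alpha(\cdot)$ and hence of a bounded number of the $x_\alpha(\cdot)$; the finiteness of $K_2$ then caps the number of symbols one needs to multiply.

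The main obstacle is the second step: transferring bounded generation from the Chevalley group to its universal central extension forces one to control $K_2(\Phi,R)$ itself, and the required $K$-theoretic input (finiteness of $K_2$, stability of $K_2$, centrality of $K_2$) is considerably deeper than what goes into Theorem~A. This is also why the uniform bounds of the kind obtained in Theorem~A are no longer available here: the order of $K_2(R)$ genuinely depends on $R$, so the final bound $L+M$ depends on $R$ through $M$. The low-rank case $\Phi=\mathsf A_2$ is the most delicate, since the identification $K_2(3,R)=K_2(R)$ requires the extra hypothesis on $R^*$, and without it the present strategy breaks down already at the stage where we wish to lift Steinberg symbols from $K_2(R)$ into $\St(\mathsf A_2,R)$ with a bounded word-length.
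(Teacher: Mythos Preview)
Your overall strategy is exactly the paper's: lift bounded generation from $G_{\mathrm{sc}}(\Phi,R)$ to $\St(\Phi,R)$ via the extension by $\K_2(\Phi,R)$, and then argue that $\K_2(\Phi,R)$ is finite and central. The reduction in your first and third paragraphs is correct and matches the paper.

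The gap is in your second paragraph. You write that ``the stability theorem for $K_2$, due to van der Kallen, Stein, Suslin and Tulenbaev, identifies $K_2(\Phi,R)$ with the stable $K_2(R)$ for simply laced $\Phi$ of rank $\ge 2$''. That is precisely what is \emph{not} available as a citation, and is in fact one of the paper's own contributions (its Theorem~F). Suslin--Tulenbaev gives the identification only for $\Phi=\mathsf A_l$, $l\ge 3$; van der Kallen extends it to $\mathsf A_2$ under the infinite-units hypothesis; Stein gives only \emph{surjective} stability for the embeddings $\mathsf A_3\hookrightarrow\mathsf D_l\hookrightarrow\mathsf E_l$. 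There is no off-the-shelf injective stability result that covers $\mathsf D_4$ or the exceptional types (Panin's theorem, for instance, starts only at $\mathsf D_5$). The paper closes this gap by a short diagram chase: combine Stein's surjectivity of $\K_2(\mathsf A_3,R)\to\K_2(\Phi,R)$ with Matsumoto's isomorphism $\K_2(\mathsf A_3,F)\cong\K_2(\Phi,F)$ over the fraction field $F$ and with the injectivity of $\K_2(\mathsf A_3,R)=\K_2(R)\to\K_2(F)$ coming from the localisation sequence, to deduce that $\K_2(\mathsf A_3,R)\to\K_2(\Phi,R)$ is an isomorphism. Your proposal needs this argument (or an equivalent one) to be complete. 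A couple of minor attributions are also off: finiteness in the function case is Bass--Tate rather than Quillen--Soul\'e, and centrality here is obtained from Stein's criterion via surjective stability, not from Tulenbaev.
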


The idea is to derive this result from Theorem~A. It suffices
to establish that the kernel
$\K_2(\Phi,R)$ of the projection $\St(\Phi,R)\to\mathrm G_{\mathrm{sc}}(\Phi,R)$ is
finite and central, and thus bounded elementary generation of $\mathrm G_{\mathrm{sc}}(\Phi,R)$
implies that of $\St(\Phi,R)$. Here are the main sources on which we rely in this proof.
\par\smallskip
$\bullet$ The {\it stable\/} linear $\K_2(R)$ is finite, for the
function case this is
proven by Hyman Bass and John Tate~\cite{BT}
and for the number case by Howard Garland~\cite{garland}.
(These finiteness results were generalised to higher $\K$-theory
by Daniel Quillen and G\"unter Harder, see the survey by Chuck Weibel~\cite{Kbook}).
\par\smallskip
$\bullet$ However, we need similar results for the unstable
$\K_2$-functors $\K_2(\Phi,R)$. For the {\it linear\/} case
$\SL(n,R)$
there is a definitive stability theorem by Andrei Suslin and
Marat Tulenbaev~\cite{ST}. However, injective stability for
Dedekind rings only
starts with $n\ge 4$, so that for $\SL(3,R)$ one has to refer to
Wilberd van der Kallen~\cite{vdKded} instead, which accounts for the
extra-condition in this case.
\par\smallskip
$\bullet$ 
For other embeddings there are no
stability theorems in the form we need them and starting
where we
want them to start. For instance, in the even orthogonal case
the theorem of Ivan Panin
\cite{panin} starts with $\Spin(10,R)$, whereas we would like to
cover also $\Spin(8,R)$. In any case, there are no similar
results for the exceptional embeddings.
\par\smallskip
Thus, we have to prove a comparison theorem relating $\K_2(\Phi,R)$ to $\K_2(\A_3,R)$. This is obtained as a corollary of partial stability results for Dedekind rings developed by Hideya Matsumoto~\cite{mats} and {\it surjective\/} stability of $\K_2$ for some embeddings, established by Michael Stein~\cite{stein} and the third author~\cite{Pl1,Pl2}.
\par
We also remark that the centrality of $\K_2$ for all Chevalley groups over arbitrary rings is accomplished by
the second author, Sergey Sinchuk and Egor Voronetsky, also in collaboration~\cite{L-another,S-amalgams,LS-another,LS,V-unitary,LSV-centrality}.
\par\smallskip
$\bullet$
An essential obstacle in the symplectic case is that $\K_2(\C_l,R)$
is the Milnor--Witt $\K_2^{\mathrm{MW}}$, rather than the usual Milnor
$\K_2^{\mathrm M}$, as for all other cases (compare \cite{suslin} for an
explicit connection between $\mathrm{K_2Sp}(R)$ and $\K_2(R)$). As is well known, it may fail
to be finite, which means that our approach does not work at all
in this case. This does not mean that the result itself fails, but the
proof would require an entirely different idea.
\par\smallskip
But even for non-symplectic
multiply laced systems, where our approach could theoretically
work, we were unable to overcome occurring technical
difficulties related to the $\K_2$-stability and comparison
theorems. At least, as yet.
\par\smallskip

In contrast to Theorem B, in our second generalisation of Theorem A, we put the restrictions on the ring rather
than the root system.

Namely, using specific calculations of $\K_2(\Phi,\GF_{q}[t])$
and $\K_2(\Phi,\GF_{q}[t,t^{-1}])$ by Eiichi Abe, Jun Morita,
J\"urgen Hurrelbrink and Ulf Rehmann~\cite{abe-morita,
hurrelbrink, morita-rehmann, rehmann} we were able to
establish similar results over $\GF_{q}[t]$ and $\GF_{q}[t,\,t^{-1}]$ also for the multiply laced systems,
even the symplectic ones.
\begin{maintheorem}
Let $\Phi$ be a reduced irreducible root system, and $R=\GF_{q}[t,\,t^{-1}]$ or $R=\GF_{q}[t]$. In the latter case assume additionally that $\Phi\neq\mathsf A_1$. Then $\St(\Phi,\,R)$ is boundedly elementarily generated.
\end{maintheorem}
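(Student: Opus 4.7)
The plan is to imitate the reduction sketched for Theorem~B, now applied with the specific rings $R=\GF_q[t]$ and $R=\GF_q[t,t^{-1}]$. From the central extension
\[
1\to \K_2(\Phi,R)\to \St(\Phi,R)\to \mathrm G_{\mathrm{sc}}(\Phi,R)\to 1,
\]
once $\K_2(\Phi,R)$ is known to be both \emph{finite} and \emph{central}, bounded elementary generation of $\St(\Phi,R)$ follows from that of $\mathrm G_{\mathrm{sc}}(\Phi,R)$. Indeed, given $g\in\St(\Phi,R)$, one lifts a bounded elementary factorisation of its image in $\mathrm G_{\mathrm{sc}}(\Phi,R)$ to an element $h\in\St(\Phi,R)$; the residual factor $gh^{-1}$ then lies in $\K_2(\Phi,R)$, which is a finite subgroup of a group generated by elementary root unipotents, so each of its elements admits some fixed-length expression in such generators, and the bound on the width of $\St(\Phi,R)$ is the sum of the width of $\mathrm G_{\mathrm{sc}}(\Phi,R)$ and the maximum length of such a correction (centrality being invoked only to commute the correction harmlessly to one side of the word).

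The three ingredients are now all in place for our specific rings. First, Theorem~A supplies bounded elementary generation of $\mathrm G_{\mathrm{sc}}(\Phi,R)$: for $R=\GF_q[t]$ this uses the hypothesis $\Phi\neq\A_1$ (with rank $\ge 2$), while for $R=\GF_q[t,t^{-1}]$ the unit group $\GF_q^{*}\times\langle t\rangle$ is infinite, so the rank restriction of Theorem~A may be dropped and all $\Phi$, including $\A_1$, are covered. Second, finiteness of $\K_2(\Phi,R)$ is exactly the content of the explicit computations of Abe--Morita~\cite{abe-morita}, Hurrelbrink~\cite{hurrelbrink}, Morita--Rehmann~\cite{morita-rehmann} and Rehmann~\cite{rehmann}: they identify $\K_2(\Phi,\GF_q[t])$ and $\K_2(\Phi,\GF_q[t,t^{-1}])$ explicitly for every reduced irreducible $\Phi$, and the answer is finite. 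Third, centrality of $\K_2(\Phi,R)$ as a subgroup of $\St(\Phi,R)$ holds for any Chevalley group of rank $\ge 2$ over any commutative ring, by the cited work of Lavrenov, Sinchuk and Voronetsky~\cite{L-another,S-amalgams,LS-another,LS,V-unitary,LSV-centrality}. Assembling the three pieces in the extension finishes the proof.

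The main obstacle, and the feature that distinguishes Theorem~C from Theorem~B, is precisely the finiteness of $\K_2(\Phi,R)$ in the multiply-laced setting, especially for $\Phi=\C_l$. Over a general Dedekind ring of arithmetic type $\K_2(\C_l,R)$ is the Milnor--Witt group $\K_2^{\mathrm{MW}}(R)$, which is typically infinite and kills the approach of Theorem~B in the symplectic case; the rings $\GF_q[t]$ and $\GF_q[t,t^{-1}]$ are nevertheless rigid enough for the Milnor--Witt obstruction to collapse, but certifying this requires the explicit Abe--Morita--Hurrelbrink--Rehmann computations rather than any generic $\K_2$-stability or comparison argument of the kind that powers Theorem~B. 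The exclusion of $\A_1$ over $\GF_q[t]$ in the hypothesis reflects an unrelated obstacle: by Nagao's amalgamated-product structure $\SL_2(\GF_q[t])$ itself is not boundedly elementarily generated, so the reduction cannot even be set up in that case.
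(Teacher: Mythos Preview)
Your overall strategy---reduce to bounded generation of $\mathrm G_{\mathrm{sc}}(\Phi,R)$ plus finiteness and centrality of $\K_2(\Phi,R)$---is exactly what the paper does. But your centrality step has a real gap.

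You assert that centrality of $\K_2(\Phi,R)$ in $\St(\Phi,R)$ ``holds for any Chevalley group of rank $\ge 2$ over any commutative ring'' by the work of Lavrenov, Sinchuk and Voronetsky. This is not correct: those results establish centrality only for rank $\ge 3$ (as the paper itself notes in the remark in \S\ref{sec-conj}), and Wendt~\cite{wendt} shows centrality can fail in rank~$2$ over quite reasonable rings. Worse, Theorem~C also covers $\Phi=\mathsf A_1$ when $R=\GF_q[t,t^{-1}]$, which is rank~$1$, so even a hypothetical rank-$\ge 2$ statement would not suffice.

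The paper closes this gap by a completely different route: rather than invoking any general centrality theorem, it shows (Proposition~\ref{laur}, using \cite[Corollary~6]{abe-morita}) that for $R=\GF_q[t,t^{-1}]$ the group $\K_2(\Phi,R)$ is \emph{generated by Steinberg symbols} $\{u,v\}_\alpha$ with $u,v\in R^*$, and these symbols are central in $\St(\Phi,R)$ for every $\Phi$ and every ring by Proposition~\ref{sym-cent}. For $R=\GF_q[t]$ centrality is trivial since $\K_2(\Phi,R)=0$ (Proposition~\ref{poly}). This symbol-generation argument is precisely what makes the small-rank and symplectic cases go through here, and it is the substantive content you are missing.
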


The paper is organised as follows. In \S~\ref{prelim}
we recall notation and collect some
preliminary results.
In \S~2 we give a sketch of the proof of
Theorem A. 
In \S~3 we recall some basic facts concerning
$K_2(\Phi,R)$ and in \S~4 collect the necessary
facts concerning finiteness of $\mathrm K_2$ in the arithmetic
case. In \S~5 we prove comparison theorems for
$\mathrm K_2(\Phi,R)$ in the case of simply laced $\Phi$,
and thus prove Theorem B. In \S~6 we recall
computation of $\mathrm K_2$ of polynomial rings,
which imply Theorem C. Finally, in \S~7 we mention
some further generalisations.


\section{Notation and preliminaries}  \label{prelim}

In this section we briefly recall the notation that will be used throughout the paper and some background
facts. For more details on Chevalley groups over rings
see \cite{Vav91} or \cite{VP}, where one can
find many further references.

\subsection{Chevalley groups}
\def\wP{\mathcal P}
\def\wQ{\mathcal Q}
Let $\Phi$ be a reduced root system 
and $W=W(\Phi)$ be its Weyl group. In our main
results, $\Phi$ will be assumed irreducible, though
in some proofs one has to use subsystems that are
not. As usual, we choose an order on $\Phi$
and let $\Phi^+$, $\Phi^{-}$ and
$\Pi=\big\{\alpha_1,\ldots,\alpha_l\big\}$ be the corresponding
sets of positive, negative and fundamental roots, respectively.
Further, we consider a lattice $\wP$ intermediate
between the root lattice $\wQ(\Phi)$ and the weight
lattice $\wP(\Phi)$. Finally, let $R$ be a commutative ring with 1,
with the multiplicative group $R^*$.
\par
These data determine the Chevalley group $G=\mathrm G_{\wP}(\Phi,R)$,
of type $(\Phi,\wP)$ over $R$. It is usually constructed as the
group of $R$-points of the Chevalley--Demazure
group scheme $\mathrm G_{\wP}(\Phi,\text{$-$})$ of type $(\Phi,\wP)$.
In the case
$\wP=\wP(\Phi)$ the group $G$ is called simply connected and
is denoted by $\mathrm G_{\sic}(\Phi,R)$. In another extreme case
$\wP=\wQ(\Phi)$ the group $G$ is called adjoint and
is denoted by $\mathrm G_{\ad}(\Phi,R)$.
\par
Many results do not depend
on the lattice $\wP$ and hold for all groups of a given
type $\Phi$.
In all such cases, or when $\wP$ is determined by
the context, we omit any reference to $\wP$ in the notation
and denote by $\mathrm G(\Phi,R)$ {\it any} Chevalley group of type
$\Phi$ over $R$.
However in some cases specific bounds
may depend on $\wP$. Usually, 
we work with a
simply connected group, but in some cases
it is convenient to work with the adjoint group, which
is then reflected in the notation.
\par
In what follows, we also fix a split maximal torus $T=\mathrm T(\Phi,R)$ in $G=\mathrm G(\Phi,R)$ and identify $\Phi$ with $\Phi(G,T)$. This choice
uniquely determines the unipotent root subgroups, $X_{\alpha}$,
$\alpha\in\Phi$, in $G$, elementary with respect to $T$. As usual,
we fix maps $x_{\alpha}\colon R\mapsto X_{\alpha}$, so that
$X_{\alpha}=\{x_{\alpha}(r)\mid r\in R\}$, and require that these parametrisations are interrelated by the Chevalley commutator formula with integer coefficients, see \cite{Carter},
\cite{Steinberg}. The above unipotent elements
$x_{\alpha}(r)$, where $\alpha\in\Phi$, $r\in R$,
elementary with respect to $\mathrm T(\Phi,R)$, are also called
[elementary] unipotent root elements or, for short, simply
root unipotents.
\par
Further,
$$ \mathrm E(\Phi,R)=\big\langle x_\alpha(r)\mid \alpha\in\Phi,\ r\in R\big\rangle $$
\noindent
denotes the {\it absolute\/} elementary subgroup of $\mathrm G(\Phi,R)$,
spanned by all elementary root unipotents, or, what is the
same, by all [elementary] root subgroups $X_{\alpha}$,
$\alpha\in\Phi$.
For $\epsilon\in\{+,-\}$ denote
$$
U^\epsilon(\Phi,\,R)=\left\langle x_\alpha(r)\mid\alpha\in\Phi^\epsilon,\ r\in R\right\rangle\leq\mathrm{E_{}}(\Phi,\,R).
$$

\subsection{Steinberg groups}
Denote by $\mathrm{St}(\Phi,\,-)$ the {\it Steinberg group\/} functor corresponding to $\Phi$. For $\Phi$ that does not have irreducible components
$\cong\mathsf A_1$, and a commutative ring $R$ the Steinberg group $\mathrm{St}(\Phi,\,R)$ is a group defined by the set of generators
$$
\{x_\alpha(r)\mid\alpha\in\Phi,\ r\in R\}
$$
subject to the {\it Steinberg relations\/}
\par\smallskip
$\bullet$ Additivity
$$
x_\alpha(r)x_\alpha(s)=x_{\alpha}(r+s)\quad\text{for}\,\ \alpha\in\Phi,\ r,s\in R, \label{add-relation} $$
\par\smallskip
$\bullet$ Chevalley commutator formula
\begin{equation} 
[x_\alpha(r),\,x_\beta(s)]=\prod_{\substack{i,j\in\mathbb N\setminus0\\i\alpha+j\beta\in\Phi}}x_{i\alpha+j\beta}(N_{\alpha\beta ij\,}r^is^j)\quad\text{for}\,\ \alpha,\beta\in\Phi,\ \beta\neq-\alpha,\ r,s\in R,
\label{chevalley-commutator} 
\end{equation} 
\noindent
where, as usual, $[g,\,h]=ghg^{-1}h^{-1}$ denotes
the left normed commutator, whereas $N_{\alpha\beta ij}\in\mathbb Z$ are the structure constants of the Chevalley group $\mathrm G_{\mathrm{sc}}(\Phi,\,R)$.
\par
The choice of the structure constants $N_{\alpha\beta ij}\in\mathbb Z$ and the order of factors in~(\ref{chevalley-commutator}) are not unique, and we fix any possible choice, see~\cite{VP, Vav08} for many more details and
further references. It is not a problem to specify signs
for classical cases, see \cite{Bour}. On the other hand
in \cite{Vav01} one can find specific choice of the
structure constants $N_{\alpha\beta}$ for
$\mathsf E_6$, $\mathsf E_7$ and $\mathsf E_8$,
corresponding to a positive Chevalley base
(in this case automatically $i=j=1$, so that
$N_{\alpha\beta11}=N_{\alpha\beta}$ are just the
structure constants of the corresponding Lie algebra).
All structure constants $N_{\alpha\beta ij}$ for
$\mathsf F_4$ and $\mathsf G_2$ are tabulated in
\cite{VP}.

\par\smallskip
$\bullet$ For $\mathsf A_1$ one needs another
relation
$$
w_\alpha(u)x_\alpha(r)w_\alpha(u)^{-1}=x_{-\alpha}(-u^{-2}r)\quad\text{for}\,\ \alpha\in\Phi,\ u\in R^*,\ r\in R, \label{a1-relation} $$
\noindent
where
\begin{equation}
\label{def-w}
w_\alpha(u)=x_\alpha(u)x_{-\alpha}(-u^{-1})x_\alpha(u).
\end{equation}

\begin{rk*}
If $\Phi$ does not have irreducible components
$\cong\mathsf A_1$ this extra relation follows from
additivity and the Chevalley commutator formula.
\end{rk*}

\subsection{Arithmetic case}

Let $F$ be a global field and $S$ be a finite non-empty set of places of $F$ containing all archimedean places when $F$ is a number field. Following~\cite{BMS} we will say that
$$
R=\{x\in F\mid v(x)\geq0\ \forall v\not\in S\}
$$
is {\it the Dedekind ring of arithmetic type} defined by the set $S$. Obviously, $R$ is indeed a Dedekind domain, and one can canonically identify the maximal ideals of $R$ with the places outside $S$.

The following result proven by Matsumoto in~\cite[Th\'eor\`eme~12.7]{mats} explains why
we usually prefer to work with simply connected
groups.
\begin{lm}
\label{e=g}
Let $R$ be a Dedekind ring of arithmetic type and
$\Phi$ a reduced irreducible root system of rank at
least $2$. Then
$$
\mathrm E_{\mathrm{sc}}(\Phi,\,R)=\mathrm G_{\mathrm{sc}}(\Phi,\,R).
$$
\end{lm}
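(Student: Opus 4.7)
The plan follows Matsumoto's strategy: reduce the equality to the vanishing of the ``unstable Whitehead quotient'' $\mathrm{G}_{\sic}(\Phi,\,R)/\mathrm{E}_{\sic}(\Phi,\,R)$, and prove this vanishing by combining local elementary generation with strong approximation for simply connected semisimple groups. Since $\mathrm{G}_{\sic}(\Phi)$ is $F$-split and of rank at least two, it is $S$-isotropic at every place, so the Kneser--Platonov--Margulis strong approximation theorem is applicable off $S$.

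The first step is the local case: for each maximal ideal $\mathfrak{m}\subset R$, one establishes $\mathrm{G}_{\sic}(\Phi,\,R_{\mathfrak{m}})=\mathrm{E}_{\sic}(\Phi,\,R_{\mathfrak{m}})$ over the discrete valuation ring $R_{\mathfrak{m}}$. This follows from the Bruhat/Iwahori decomposition together with the fact that over a local ring the split torus is generated by the semisimple elements $h_\alpha(u)=w_\alpha(u)w_\alpha(-1)$ with $u\in R_{\mathfrak{m}}^*$, each of which lies in $\mathrm{E}_{\sic}$ directly from the definition of $w_\alpha(u)$. A Chinese remainder argument then yields surjectivity of the reduction map $\mathrm{E}_{\sic}(\Phi,\,R)\twoheadrightarrow\mathrm{G}_{\sic}(\Phi,\,R/\mathfrak{a})$ for every nonzero ideal $\mathfrak{a}\subset R$. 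Given $g\in\mathrm{G}_{\sic}(\Phi,\,R)$, one thus finds $e\in\mathrm{E}_{\sic}(\Phi,\,R)$ such that $e^{-1}g$ lies in the principal congruence subgroup $\mathrm{G}_{\sic}(\Phi,\,R,\mathfrak{a})$ for any prescribed ideal $\mathfrak{a}$.

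The main obstacle is the final absorption step: showing that for $\mathfrak{a}$ chosen deep enough, $\mathrm{G}_{\sic}(\Phi,\,R,\mathfrak{a})\subseteq\mathrm{E}_{\sic}(\Phi,\,R)$. This is where strong approximation is used in a serious way, together with Bass--Milnor--Serre-type techniques giving triviality of stable $\mathrm{SK}_1$ for arithmetic rings, and it is here that the rank-$\ge 2$ hypothesis enters decisively: it provides enough Chevalley commutator relations among non-proportional roots (in particular $\mathsf{A}_2$- and $\mathsf{A}_1\times\mathsf{A}_1$-type moves) to rewrite small congruence elements as explicit products of root unipotents. The rank-one case is genuinely different, as shown by classical counterexamples for $\mathrm{SL}_2$ over imaginary quadratic integer rings with finite unit group, which confirms that the hypothesis cannot be relaxed without additional assumptions on $R^*$.
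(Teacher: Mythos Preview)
The paper does not give its own proof of this lemma; it simply records it as Matsumoto's Th\'eor\`eme~12.7 and, for $\Phi=\mathsf A_l,\mathsf C_l$, credits Bass--Milnor--Serre. So there is nothing to compare against beyond the citation itself. Your outline is in the spirit of Matsumoto's argument and correctly identifies the architecture: local elementary generation, surjectivity of reduction modulo every ideal, then an absorption step for a deep congruence subgroup.

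That said, your Step~3 is where essentially all the content lives, and you have not carried it out. Observe that, given your Steps~1--2, the assertion ``$\mathrm G_{\sic}(\Phi,R,\mathfrak a)\subseteq\mathrm E_{\sic}(\Phi,R)$ for some $\mathfrak a$'' is \emph{equivalent} to the lemma, so what you have written is a restatement rather than a reduction. Listing ``strong approximation, BMS-type triviality of stable $\mathrm{SK}_1$, and enough Chevalley commutator relations'' names the right toolbox but does not supply the argument: in Matsumoto's proof the actual work is a delicate analysis of Steinberg symbols over local and global fields and of the metaplectic/congruence kernel, with strong approximation used to pass from local data to global triviality. None of that machinery is visible in your sketch. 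Also, the appeal to ``triviality of stable $\mathrm{SK}_1$'' is a red herring here: what is required is the \emph{unstable} statement $\mathrm G_{\sic}(\Phi,R)=\mathrm E_{\sic}(\Phi,R)$ for the given $\Phi$, and stable triviality does not by itself yield this without further stability or comparison input, which is exactly what is at issue. Your diagnosis of the rank~$\ge 2$ hypothesis and of the rank-one counterexamples is correct, but as written the proposal is a roadmap, not a proof.
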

In fact, for $\Phi=\mathsf A_l,\,\mathsf C_l$
this result was established already by Hyman Bass, John Milnor
and Jean-Pierre Serre in~\cite{BMS}.
Recently Anastasia Stavrova generalised it to isotropic reductive groups and to polynomial rings over $R$, see~\cite[Corollary~1.2]{stavrova}.


\section{Uniform bounded generation of Chevalley groups:\\ around the proof of Theorem A}\label{Sec_2}


In this section we sketch a proof of Theorem A.  


Recall that the results of~\cite{Mor, MRS, trost2, KMR} completely
solve the problem of the uniform bounded elementary generation
for the special linear groups $\SL(n,R)$, $n\ge 3$, ---
and when $R^*$ is infinite, even for $\SL(2,R)$.
\par
Observe that the methods of ~\cite{KPV} completely reduce the proof
of similar result for almost all other Chevalley groups, including even the {\it symplectic\/} groups $\Sp(2l,R)$, $l\ge 3$, to the case of $\Phi=\A_2$.
\par
{\it The only\/} case
that does not follow rightaway from results
of the above papers, is that of $\Sp(4,R)$.
The analysis of that case is longer and far too technical,
its inclusion would tilt the balance of the present paper. 
Therefore, below we give a complete argument only for the cases 
needed for the proof of Theorem B, collecting these cases in Theorem E. 
The proof for all other cases, with all details and explicit bounds, 
is contained in \cite{KPV2}.


\subsection{Tavgen rank reduction theorem}
In most cases the reduction to $\A_1$ or $\A_2$
is based on the following cunning observation, whose idea
goes back to the work of Oleg Tavgen \cite{Tavgen}.
His trick was then generalised in \cite{VSS} and
\cite{SSV}. The following final form is proven in 
{\cite[Theorem~3.2]{KPV}}.

\begin{lm}\label{tavgen}
Let $\Phi$ be a reduced irreducible root system of rank $l\geq2$, and $R$ be a commutative ring. Let $\Delta_1,\ldots,\Delta_t$ be some subsystems of $\Phi$, whose union contains all fundamental roots of $\Phi$. Suppose that for all $\Delta_i$ the elementary Chevalley group $\mathrm{E_{}}(\Delta_i,\,R)$ admits a unitriangular factorisation
$$
\mathrm{E_{}}(\Delta_i,\,R)=U^+(\Delta_i,\,R)\,U^-(\Delta_i,\,R)\,U^+(\Delta_i,\,R)\ldots U^{\pm}(\Delta_i,\,R)
$$
of length $N$ {\rm(}not depending on $i${\rm)}. Then the elementary group $\mathrm{E_{}}(\Phi,\,R)$ itself admits unitriangular factorisation
$$
\mathrm{E_{}}(\Phi,\,R)=U^+(\Phi,\,R)\,U^-(\Phi,\,R)\,U^+(\Phi,\,R)\ldots U^{\pm}(\Phi,\,R)
$$
of the same length $N$.
\end{lm}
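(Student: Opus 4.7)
The plan is to prove the nontrivial inclusion $\mathrm E(\Phi,R) \subseteq F_N$, where I write $F_N := U^+(\Phi,R) U^-(\Phi,R) \cdots U^\pm(\Phi,R)$ for the length-$N$ alternating product (and similarly $F_N(\Delta_i)$ for each subsystem); the reverse inclusion is immediate. First I would note that the fixed order on $\Phi$ restricts to a compatible order on each $\Delta_i$, giving $\Delta_i^+ = \Delta_i \cap \Phi^+$ and thus $U^\epsilon(\Delta_i,R) \leq U^\epsilon(\Phi,R)$ for each sign $\epsilon$. Consequently the hypothesis $\mathrm E(\Delta_i,R) = F_N(\Delta_i)$ upgrades to $\mathrm E(\Delta_i,R) \subseteq F_N$. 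Moreover, since the $\Delta_i$ cover the fundamental roots and each is closed under negation, the subgroups $\mathrm E(\Delta_i,R)$ jointly contain $x_{\pm\alpha_j}(r)$ for every $\alpha_j \in \Pi$ and every $r \in R$, so $\mathrm E(\Phi,R) = \langle \mathrm E(\Delta_i,R) : 1 \leq i \leq t \rangle$.

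The task therefore reduces to showing that the set $F_N$ is closed under right multiplication by each $\mathrm E(\Delta_i,R)$. Given $f \in F_N$ and $g \in \mathrm E(\Delta_i,R)$, I would write both in their length-$N$ alternating factorizations and attempt to fuse them into a single product of length $N$. Naive concatenation yields length roughly $2N$; to compress it, one absorbs successive factors of $g$ into $f$, starting by merging the last factor of $f$ with the first factor of $g$, which lie in the same $U^\epsilon(\Phi,R)$. The remaining factors of $g$ must then be pushed leftward through $f$ using the Chevalley commutator formula.

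The main obstacle is keeping the total length at exactly $N$ throughout this process. Each time a factor of $g$ with one sign must pass a factor of $f$ of the opposite sign, the Chevalley commutator formula produces correction terms in root subgroups $X_{i\alpha+j\beta}$; one must verify that these corrections can be absorbed into the alternating pattern without adding extra factors. This delicate bookkeeping, which depends on the parity of $N$ and on the matching of sign patterns between $f$ and $g$, is the technical heart of the argument, and the most convenient route is to invoke the formulation of \cite[Theorem~3.2]{KPV}, on which the present lemma is based. Once closure is established, every element of $\mathrm E(\Phi,R) = \langle \mathrm E(\Delta_i,R) \rangle$ lies in $F_N$, completing the proof.
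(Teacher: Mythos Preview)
The paper itself offers no proof here; it simply records that the lemma is \cite[Theorem~3.2]{KPV}. Your reduction to showing that $F_N$ is stable under right multiplication by each $\mathrm E(\Delta_i,R)$ is correct, and in the end you too defer to \cite{KPV}. But the mechanism you sketch for that closure step is not the one that works, and as described it has a genuine gap. ``Pushing a factor of $g$ leftward through a factor of $f$ of the opposite sign via the Chevalley commutator formula'' is exactly what one \emph{cannot} do: for $u\in U^+(\Phi,R)$ and $v\in U^-(\Delta_i,R)$ the commutator $[u,v]$ is a product of root elements of \emph{both} signs, so there is no rewriting $uv=v'u'$ with $v'\in U^-$, $u'\in U^+$. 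Repeating this would make the length blow up, not shrink from $2N$ to $N$.

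The actual Tavgen trick bypasses commutator expansion entirely by using the parabolic (Levi) structure attached to $\Delta_i$. One writes $U^{\epsilon}(\Phi,R)=U_i^{\epsilon}\cdot U^{\epsilon}(\Delta_i,R)$, where $U_i^{\epsilon}$ is the product of the $X_\beta$ with $\beta\in\Phi^{\epsilon}$ lying outside the span of $\Delta_i$ (the unipotent radical of the corresponding parabolic). The crucial structural fact is that $\mathrm E(\Delta_i,R)$ \emph{normalises} $U_i^{\epsilon}$ for both signs $\epsilon$. Splitting each factor $u_k=w_kv_k$ with $w_k\in U_i^{\epsilon_k}$, $v_k\in U^{\epsilon_k}(\Delta_i,R)$, one conjugates all the $v_k$ and $g$ to the far right past the $w$'s, applies the hypothesis to rewrite $v_1\cdots v_N\,g\in\mathrm E(\Delta_i,R)$ as an alternating product $v_1'\cdots v_N'$ of the \emph{same} length $N$, and then conjugates the $v_k'$ back into their slots, landing in $U^{\epsilon_1}\cdots U^{\epsilon_N}=F_N$. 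That normalisation, not commutator bookkeeping, is the heart of the argument.
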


Below, we essentially apply it to two cases, when
all $\Delta_i$'s are $\A_1$, and when all of them are
$\A_2$.


\subsection{The case when $R^*$ is infinite}
The case where $R$ has infinitely many units
and its field of fractions is a number field
is {\it completely\/} solved, with very small {\it absolute} constant. We cannot describe the whole chain of events here, and mention all contributors.
After the initial breakthrough by Maxim
Vsemirnov \cite{Vs}, which was a first unconditional
result of this sort, not depending on the GRH,
Aleksander Morgan, Andrei Rapinchuk and Sury
\cite{MRS} succeeded in solving
the number case, with the bound $L=9$.
This bound was then improved to $L=8$ by
Bruce Jordan and Yevgeny Zaytman \cite{JZ}
(and can be further improved in the presence of
finite or real valuations in $S$).

\begin{lm}\cite{JZ}
\label{sury}
For any Dedekind ring of arithmetic type $R$ in a number field  with
the infinite multiplicative group $R^*$ any element in
$\SL(2,R)$ is a product of at most $8$ elementary transvections. 
\end{lm}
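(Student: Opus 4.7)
The plan is to adapt the Morgan--Rapinchuk--Sury strategy from~\cite{MRS}, whose central ingredients are a generalised Dirichlet theorem on primes in arithmetic progressions valid for Dedekind rings of arithmetic type, together with the infinitude of the $S$-unit group $R^*$. Given $\gamma=\begin{pmatrix} a & b \\ c & d \end{pmatrix}\in\SL(2,R)$, the first step is to apply a couple of elementary row/column operations to replace $c$ by some $c'=c+ta$, with $t\in R$ chosen so that $c'$ generates a prime ideal of $R$, coprime to any finite list of primes we wish to avoid. The existence of such $t$ is precisely a Dirichlet-type statement, which in this arithmetic setting follows from strong approximation and Chebotarev density.

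Once the bottom-left entry is a prime element, one invokes the Bass--Milnor--Serre theory of Mennicke symbols~\cite{BMS}: over a Dedekind ring of arithmetic type, the universal Mennicke symbol attached to a unimodular pair is trivial, which is the $\SL_2$-incarnation of the congruence subgroup theorem. Triviality of the symbol translates into an explicit expression of $\gamma$ as a product of elementary transvections, and the only remaining issue is to count these factors. A standard reduction, in which one uses the freedom to multiply by diagonal matrices with unit entries, expresses $\gamma$ as a product of nine elementary transvections, which is the bound of~\cite{MRS}.

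The reduction from nine to eight obtained in~\cite{JZ} comes from merging two of these elementary steps by arranging the Dirichlet-prime $c'$ to simultaneously satisfy an additional congruence modulo a suitable modulus, thereby eliminating one of the ``unit-adjusting'' transvections. This is where the assumption that $R^*$ is infinite becomes indispensable: it supplies the unit with prescribed residue required for this merging. I expect the main obstacle to be precisely this sharp bookkeeping: one needs the Dirichlet theorem in the refined form of primes satisfying two independent conditions (primality together with an auxiliary congruence), and a careful case analysis showing that the saved transvection is not forced back into the product elsewhere. The presence of finite or real valuations in $S$ permits further small savings, but these refinements are not needed for the applications to bounded generation of Steinberg groups pursued in the sequel.
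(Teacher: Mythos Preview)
The paper does not prove this lemma: it is stated with a citation to \cite{JZ} and used as a black box, so there is no ``paper's own proof'' to compare against. Your sketch is therefore not competing with anything in the text; it is an attempt to reconstruct the argument of Morgan--Rapinchuk--Sury and Jordan--Zaytman.

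As a sketch it captures the correct ingredients (a Dirichlet-type prime-production step, the infinitude of $R^*$, and careful bookkeeping of elementary factors), but one point is misleading. The MRS argument does not proceed through the Bass--Milnor--Serre Mennicke-symbol formalism in the way you describe; rather, after arranging a prime entry one uses Artin reciprocity directly to locate an $S$-unit with a prescribed residue modulo that prime, and then an explicit $2\times 2$ manipulation finishes the job. Invoking ``triviality of the universal Mennicke symbol'' as the mechanism that produces the explicit factorisation overstates what BMS gives you here and hides where the actual count of nine (or eight) comes from. If you want to turn this into a genuine proof, replace the Mennicke-symbol paragraph with the explicit Artin-reciprocity/unit step from \cite{MRS}, and then consult \cite{JZ} for the precise rearrangement that saves one factor.
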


\begin{rk} \label{rem-rk1} 
In the paper presently under way
the first author, Dave Morris and Andrei Rapinchuk
\cite{KMR} improve the bound to $L=7$ in the number case (which we believe is the best possible and cannot
be further improved, in general). Also, they obtain a
similar result in the function case, with the bound
$L=8$ (which, we believe, can be further improved to $L=7$).
\end{rk}


\begin{maintheorem}[\cite{KMR}]
\label{infinite}
For any Dedekind ring of arithmetic type $R$ 
with the infinite multiplicative group $R^*$ any element in
$\mathrm G_{\mathrm{sc}}(\Phi,\,R)$ is a product of at most $L=8|\Phi^+|$ elementary unipotents.
\end{maintheorem}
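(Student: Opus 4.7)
The plan is to assemble Theorem E directly from the three ingredients already quoted in the text: Lemma 2.2 (the Jordan--Zaytman bound of $8$ for $\SL_2$), Lemma 2.1 (Tavgen's rank-reduction through unitriangular factorisations), and Lemma 1.2 (Matsumoto's identity $\mathrm E_{\mathrm{sc}}=\mathrm G_{\mathrm{sc}}$ for rank $\geq 2$). The case $\Phi=\mathsf A_1$ is literally Lemma 2.2, since then $|\Phi^+|=1$ and $\mathrm G_{\mathrm{sc}}(\mathsf A_1,R)=\SL(2,R)$; so I may assume that $\Phi$ has rank $\geq 2$.

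Next I choose the family of $\mathsf A_1$ subsystems $\Delta_i:=\{\pm\alpha_i\}$ indexed by the fundamental roots $\alpha_i\in\Pi$, which evidently covers $\Pi$ as required by Lemma 2.1. By Lemma 2.2, every element of $\mathrm E(\Delta_i,R)\cong\SL(2,R)$ is a product of at most eight elementary transvections; since each transvection lies in either $U^+(\Delta_i,R)$ or $U^-(\Delta_i,R)$, collapsing consecutive factors of the same sign and padding with identities when necessary yields an alternating product
$$
u_1\,v_1\,u_2\,v_2\,u_3\,v_3\,u_4\,v_4,\qquad u_j\in U^+(\Delta_i,R),\ v_j\in U^-(\Delta_i,R),
$$
i.e.\ a unitriangular factorisation of $\mathrm E(\Delta_i,R)$ of uniform length $N=8$. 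Tavgen's Lemma 2.1 then lifts these to a unitriangular factorisation of $\mathrm E(\Phi,R)$ of the same length $8$.

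Finally, for any fixed linear order on $\Phi^\pm$ the multiplication map $\prod_{\alpha\in\Phi^\pm}X_\alpha\to U^\pm(\Phi,R)$ is a bijection, so each of the eight alternating factors decomposes as a product of exactly $|\Phi^+|$ root unipotents. Multiplying these decompositions yields the bound $8|\Phi^+|$ on the elementary width of $\mathrm E_{\mathrm{sc}}(\Phi,R)$, and Lemma 1.2 upgrades the statement from $\mathrm E_{\mathrm{sc}}(\Phi,R)$ to $\mathrm G_{\mathrm{sc}}(\Phi,R)$. The argument contains no genuine obstacle: all the arithmetic content is absorbed into the deep $\SL_2$ bound of Lemma 2.2, while the rank-reduction step and the standard parametrisation of the unipotent radicals are purely formal.
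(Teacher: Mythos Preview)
Your argument is exactly the paper's own proof, spelled out in more detail; the paper itself just writes ``Combine Lemma~2.1 with $\Delta_i=\mathsf A_1$, Lemma~2.2 and Remark~2.3''. The one real omission is that Lemma~2.2 (the Jordan--Zaytman bound) is stated only for rings of $S$-integers in \emph{number} fields, so as written your proof does not cover Dedekind rings of arithmetic type in global function fields with infinite $R^*$; for that you must also invoke Remark~2.3, which records the $\SL_2$ bound $L=8$ in the function case due to~[KMR] --- precisely the extra ingredient the paper lists. (Minor: Matsumoto's identity $\mathrm E_{\mathrm{sc}}=\mathrm G_{\mathrm{sc}}$ is Lemma~1.1, not~1.2.)
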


\begin{proof}
Combine Lemma \ref{tavgen} with $\Delta_i=A_1$, Lemma \ref{sury} and Remark \ref{rem-rk1}. 
\end{proof}


Thus, if we are not interested in actual bounds, but
just in uniform boundedness, 
one can restrict oneself to considering 
the Dedekind rings of arithmetic type 
with {\it finite\/} multiplicative groups. In the number case, these are $\mathbb Z$ and the rings of
integers in imaginary quadratic number fields. 
Note that as discovered by Alexander Trost \cite{trost2}, in
the function case for ranks $\ge 2$ we
do not have to distinguish between rings with finite
and infinite multiplicative group. 

\subsection{The simply laced case and $\Phi=\mathsf F_4$}

\begin{lm}~\cite[Theorem~4.1]{trost2}
\label{trost}
For each $l\ge 2$, there exists a constant $L=L(l)\in\mathbb N$ such that for any Dedekind ring of arithmetic type $R$, any element in $\mathrm{G}_{\mathrm{sc}}(\mathsf A_{l},\,R)$ is a product of at most $L$ elementary root unipotents.
\end{lm}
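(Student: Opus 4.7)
The plan is to dichotomize on the size of the unit group $R^*$. When $R^*$ is infinite, Theorem~E applies directly with $\Phi=\mathsf A_l$ and yields the explicit uniform bound $L=8|\Phi^+|=4l(l+1)$, which depends only on $l$. It remains to treat the Dedekind rings of arithmetic type for which $R^*$ is finite.

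In the number case with $R^*$ finite, the possibilities for $R$ are $\mathbb Z$ itself and the rings of integers $\mathcal O$ in imaginary quadratic fields. For $n=l+1\geq 3$ one invokes the classical theorem of Carter and Keller, which furnishes an elementary width bound for $\mathrm{SL}_{n}(\mathcal O)$ depending only on $n$ and uniform in the ring of integers $\mathcal O$. Taking the maximum of the Carter-Keller bound and the bound coming from Theorem~E then settles the number case. For the function case, $R=\mathcal O_S$ is the coordinate ring of a smooth affine curve over $\mathbb F_q$ minus finitely many closed points; here $R^*$ is essentially always finite while the rings $R$ vary in an infinite family, so one genuinely needs a mechanism that replaces the Dirichlet-type abundance of units. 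The strategy of \cite{trost2} combines surjective $\mathrm K_1$-stability for $\mathrm{SL}_n$ over Dedekind rings (which is available precisely for $n\geq 3$), strong approximation for $\mathrm{SL}_n$ over global function fields, and a uniform bound on the elementary width of an appropriate principal congruence subgroup of $\mathrm{SL}_n(R)$ obtained via an analysis of Mennicke-type symbols. These ingredients are assembled in such a way that the final bound depends only on $n$, and not on the characteristic, the cardinality of the residue fields, or the choice of $S$.

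The main obstacle is the uniform control of the elementary width of the congruence subgroup as $R=\mathcal O_S$ ranges over all coordinate rings of arithmetic type in positive characteristic. One has no archimedean analysis at hand, and the Dirichlet-type unit lattice of $R^*$ has rank zero, so each step where a number-theoretic proof would exploit infinitely many units must be replaced by a commutator manipulation together with strong approximation. This is the technical heart of \cite{trost2}, after which assembly of the pieces into a single bound $L(l)$ is routine.
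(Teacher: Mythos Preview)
Your strategy coincides with the paper's: handle the number case via Carter--Keller--Paige/Morris, the function case via Trost~\cite{trost2}, and use the $R^*$-infinite result to make the number-field bound independent of the degree. Two corrections are needed, however.

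First, the bound $8|\Phi^+|$ you quote for $R^*$ infinite is Theorem~D, not Theorem~E. This is not a cosmetic slip: Theorem~E is \emph{deduced from} the present lemma (via Tavgen's reduction, Lemma~\ref{tavgen}), so invoking it here would be circular. Theorem~D, by contrast, rests only on Lemma~\ref{sury} and Remark~\ref{rem-rk1} and is the legitimate input.

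Second, your claim that in the function case ``$R^*$ is essentially always finite'' is false: by the $S$-unit theorem the rank of $R^*$ equals $|S|-1$, so $R^*$ is finite only when $|S|=1$ (e.g.\ $\mathbb F_q[t]$) and infinite otherwise (e.g.\ $\mathbb F_q[t,t^{-1}]$). This does not actually damage the argument, since~\cite[Theorem~4.1]{trost2} gives a bound depending only on $l$ for \emph{all} function-field $R$, and the paper simply cites it in one stroke for the entire function case rather than splitting on $|R^*|$. Your commentary on Trost's internal mechanism is not part of the proof and can be dropped.
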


\begin{proof}

By a theorem of Carter--Keller--Paige  (redeveloped
by Morris \cite{Mor}), see \cite[(2.4)]{CKP}, 
bounded generation for groups of type $\mathsf A_{l}$, $l\ge 2$, holds for Dedekind rings $R$ in number fields $K$, with a bound depending on $l$ and also on the degree $d$ of $K$. But since for all degrees $d\ge 3$
the existence of a uniform bound already follows from
Theorem D, we only need to take maximum of that, and
the universal bound for $d=1,2$.
\par
Combining this result with the subsequent work of Trost~\cite{trost2} on the function field case,
one obtains
the  result.
\end{proof}

\begin{rk}
In fact, in the sequel we only need the special case
of the above result pertaining to $\SL(3,R)$, which
corresponds to $\Phi=\mathsf A_2$. 
In the function case Trost \cite{trost2} gave the estimate $L(2)\le 65$.
No such explicit estimate is known in the number case.

\end{rk}

\par
Since the fundamental systems of the simply laced
systems and $\F_4$ are covered by copies of $\A_2$,
combining Lemma~\ref{tavgen} with Lemma \ref{trost} one gets
another stronger form of Theorem A, now without the
assumption that $R^*$ is infinite, but only in the
special case of simply laced systems of rank $\ge 2$ and
$\mathsf F_4$. This is the only part of Theorem A
on which Theorem B relies.

\begin{maintheorem}
Let $\Phi$ be simply laced of rank $\ge 2$ or
$\Phi=\mathsf F_4$ and $R$
be any Dedekind ring of arithmetic type. Then
$\mathrm G_{\mathrm{sc}}(\Phi,\,R)$ is a product of at most $L(2)\cdot|\Phi^+|$ elementary unipotents.
\end{maintheorem}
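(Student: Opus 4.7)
The plan is exactly the one sketched just before the statement: combine Tavgen's rank reduction (Lemma \ref{tavgen}) with the $\mathsf A_2$-case of Lemma \ref{trost}, using the $\mathsf A_2$-subsystems associated to edges of the Dynkin diagram of $\Phi$.

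First I would verify that the fundamental system $\Pi$ of $\Phi$ can be covered by $\mathsf A_2$-subsystems of $\Phi$. For $\Phi$ simply laced of rank $\ge 2$, each edge $(\alpha_i,\alpha_j)$ of the (connected) Dynkin diagram produces a subsystem $\Delta_{ij}=\{\pm\alpha_i,\pm\alpha_j,\pm(\alpha_i+\alpha_j)\}\cong\mathsf A_2$, since $\alpha_i,\alpha_j$ have equal length and meet at angle $120^\circ$; these $\Delta_{ij}$ already cover $\Pi$. For $\Phi=\mathsf F_4$ in the standard numbering (with $\alpha_1,\alpha_2$ long and $\alpha_3,\alpha_4$ short, joined by a double bond between $\alpha_2$ and $\alpha_3$), the same-length adjacent pairs $\{\alpha_1,\alpha_2\}$ and $\{\alpha_3,\alpha_4\}$ each meet at $120^\circ$ and hence each span an $\mathsf A_2$-subsystem of $\mathsf F_4$; these two subsystems cover $\Pi$.

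Next, for each such $\Delta\cong\mathsf A_2$, Lemma \ref{e=g} identifies $\mathrm G_{\mathrm{sc}}(\Delta,R)$ with $\mathrm E(\Delta,R)$, and Lemma \ref{trost} with $l=2$ produces a constant $L(2)$ such that every element of $\mathrm E(\Delta,R)$ is a product of at most $L(2)$ elementary root unipotents. Since any such factor $x_\alpha(r)$ lies in $U^+(\Delta,R)$ or in $U^-(\Delta,R)$ according to the sign of $\alpha$, consolidating consecutive same-sign factors (and padding with identities to make the $U^\pm$-pattern alternating) yields a unitriangular factorisation of $\mathrm E(\Delta,R)$ of length $L(2)$, uniform over the chosen $\Delta_i$'s. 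Lemma \ref{tavgen} then promotes these to a unitriangular factorisation
$$
\mathrm E(\Phi,R)=U^{\pm}(\Phi,R)\,U^{\mp}(\Phi,R)\cdots U^{\pm}(\Phi,R)
$$
of the same length $L(2)$.

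Finally, the Chevalley commutator formula writes every element of $U^{\pm}(\Phi,R)$ as an ordered product of $|\Phi^+|$ root unipotents, one from each positive (resp.\ negative) root subgroup, in any fixed linear ordering of $\Phi^{\pm}$. Combining this with Lemma \ref{e=g} applied to $\Phi$ itself shows that every element of $\mathrm G_{\mathrm{sc}}(\Phi,R)=\mathrm E(\Phi,R)$ is a product of at most $L(2)\cdot|\Phi^+|$ elementary root unipotents, as claimed. No genuine obstacle arises beyond invoking the cited inputs; the only point requiring a moment's inspection is the $\mathsf F_4$ case, where one must check that two adjacent same-length fundamental roots span an $\mathsf A_2$-subsystem, which is immediate from the standard realisation of $\mathsf F_4$ in $\mathbb R^4$.
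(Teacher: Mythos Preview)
Your proposal is correct and follows exactly the approach the paper indicates: cover the fundamental system of $\Phi$ by $\mathsf A_2$-subsystems (via adjacent same-length fundamental roots, in particular the long pair and short pair in $\mathsf F_4$), apply Lemma~\ref{trost} for $l=2$ to each, convert the elementary-width bound into a unitriangular factorisation of length $L(2)$, and feed this into Lemma~\ref{tavgen}. The paper's own argument is the single sentence preceding the statement (``combining Lemma~\ref{tavgen} with Lemma~\ref{trost}''), and you have simply spelled out the routine details.
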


The bound here is very rough, since $L(2)$ is the
number of {\it elementary\/} factors, the number of
unitriangular ones can be much smaller. Also, the
use of stability allows to get much better bounds,
of the type $L=L(2)+4|\Phi^+|$, 
where some multiple of $|\Phi^+|$ occurs as a summand,
not as a factor. 


\subsection{Idea of the rest of the proof of Theorem A}
\label{sec:stab}

To establish Theorem~A for the cases of Chevalley groups $\mathrm G(\Phi,R)$ not covered by Theorem~E, where Tavgen's trick played a crucial role, 
we use the arguments based on the surjective stability of K$_1$-functor, in the spirit of \cite{KPV}.  
This way, we obtain the following reduction theorem.

\begin{lm}\label{reduction}
Let $R$ be a  Dedekind ring of arithmetic type. Then (uniform) bounded generation of the groups $\mathrm G(\Phi,R)$, $\Phi\neq \mathsf C_2$ 
follows from (uniform) bounded generation of the group $\mathrm G(\mathsf A_2,R)$. \qed 
\end{lm}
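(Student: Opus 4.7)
The plan is to descend from a general $\Phi\neq\mathsf C_2$ to $\mathsf A_2$ by rank-reduction, following the strategy developed in~\cite{KPV}. First, if $\Phi$ is simply laced of rank $\geq 2$ or $\Phi=\mathsf F_4$, Theorem~E applies: together with Lemma~\ref{tavgen} for $\Delta_i=\mathsf A_2$ it directly reduces bounded generation of $\mathrm G(\Phi,R)$ to that of $\mathrm G(\mathsf A_2,R)$. This disposes of everything except the multiply laced cases $\Phi\in\{\mathsf B_l\,(l\geq 3),\mathsf C_l\,(l\geq 3),\mathsf G_2\}$.

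For each of these remaining types, I would choose a subsystem $\Phi'\subset\Phi$ already covered by the first step: take $\Phi'=\mathsf A_{l-1}\subset\mathsf B_l$ (the long roots $e_i-e_j$), $\Phi'=\mathsf A_{l-1}\subset\mathsf C_l$ (the short roots $e_i-e_j$), and $\Phi'=\mathsf A_2\subset\mathsf G_2$. In each case $\Phi'$ is the root subsystem of the Levi factor of a suitable maximal parabolic $P\leq\mathrm G(\Phi,R)$. Using the Gauss decomposition with respect to $P$ together with surjective stability of the $\mathrm K_1$-functor for the embedding $\mathrm G(\Phi',R)\hookrightarrow\mathrm G(\Phi,R)$ over Dedekind rings of arithmetic type, I would express every $g\in\mathrm G(\Phi,R)$ as a product of a uniformly bounded number of elementary root unipotents of $\Phi$ times an element of $\mathrm G(\Phi',R)$. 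The first step then bounds the residual factor, completing the argument.

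The technical heart of the plan is establishing that the $\mathrm K_1$-stability step admits a uniform bound on the elementary cost depending only on $\Phi$; this is exactly what is carried out, case by case, in~\cite{KPV}, which we invoke. The exclusion of $\mathsf C_2$ in the statement reflects the failure of this strategy at that rank: the only proper irreducible subsystem of $\mathsf C_2$ is $\mathsf A_1$, which is covered neither by the first step nor by the hypothesis on $\mathsf A_2$, so a different and substantially longer argument (deferred to~\cite{KPV2}; see the closing remarks of \S\ref{Sec_2}) is required to treat that case.
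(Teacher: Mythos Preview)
Your overall strategy matches the paper's: dispose of simply laced systems and $\mathsf F_4$ via Theorem~E and Lemma~\ref{tavgen}, then treat the remaining multiply laced types by stability-style reductions in the spirit of~\cite{KPV}. Two of your three remaining cases, however, do not go through as written.

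For $\mathsf G_2$, your claim that $\mathsf A_2\subset\mathsf G_2$ is the Levi of a maximal parabolic is false: $\mathsf G_2$ has rank~$2$, so every maximal parabolic has Levi of type $\mathsf A_1$. The long-root $\mathsf A_2$ is a closed subsystem but not parabolic, hence there is no $P$ against which to run a Gauss decomposition, and the $\K_1$-stability framework you invoke does not apply. The paper instead appeals to a direct computation, \cite[Proposition~4.3]{KPV}, which handles $\mathsf G_2$ in terms of its long $\mathsf A_2$ by a different mechanism.

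For $\mathsf C_l$, your appeal to~\cite{KPV} for the bounded reduction $\mathrm G(\mathsf C_l,R)\to\mathrm G(\mathsf A_{l-1},R)$ through the Siegel parabolic is not accurate. The paper first reduces $\mathsf C_l$ to $\mathsf C_3$ along the standard stability chain, and then stresses that the step $\mathsf C_3\to\mathsf A_2$ is ``somewhat surprising'': beyond the methods of~\cite{KPV} it requires the arithmetic lemmas of Carter--Keller~\cite{CaKe} in the number case and Trost~\cite{trost2} in the function case; indeed the paper records that this reduction was only realised after~\cite{KPV}. Your sketch omits precisely this arithmetic input, without which the symplectic case does not yield uniform bounds. (For $\mathsf B_l$ the paper likewise routes through $\mathsf B_3$ and~\cite[Section~6.2]{KPV} rather than jumping straight to $\mathsf A_{l-1}$; your shortcut may be salvageable there, but it is not what~\cite{KPV} actually carries out.)
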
 

For instance, the case $\mathrm{G}(\mathsf G_2,\,R)$ follows from \cite[Proposition~4.3]{KPV}. 
By stability arguments, $\mathrm G(\mathsf B_l,R)$ and $\mathrm G(\mathsf C_l,R)$ $(l\ge 3)$ reduce to $\mathrm G(\mathsf B_3,R)$ and $\mathrm G(\mathsf C_3,R)$, 
respectively. The group $\mathrm G(\mathsf B_3,R)$ is treated as in \cite[Section~6.2]{KPV}. Somewhat surprisingly, 
the case $\mathrm G(\mathsf C_3,R)$ can be reduced to $\mathrm G(\mathsf A_2,R)$, along the lines of \cite[Sections 5 and 6]{KPV}, 
using arithmetic lemmas of Carter and Keller \cite{CaKe} in the number case and of Trost \cite{trost2} 
in the function case. Actually, the idea of such a reduction
was contained already in Zakiryanov's thesis, see \cite{Zak}, but the authors of \cite{KPV}  have not realised this fact before
rediscovering the same idea in the general case in
March 2023\footnote{It is wrongly claimed in
\cite{Zak} that $\Sp(4,\mathbb Z)$ is not boundedly
generated. As a result this work has not been given
the credit it deserves. In particular, it should have been cited in the historical survey of \cite{KPV}.}.

Note that in all cases we get absolute constants as bounds, depending only on $L(2)$ appearing in Lemma 
\ref{trost}. Thus these bounds are explicit in the function case and implicit in the case where $R$ is 
a quadratic imaginary ring.

This finishes the proof of Theorem A in all cases except $\mathsf C_2$ which turns out to be 
much more involved. It does not reduce to $\mathrm G(\mathsf A_2,R)$ and is settled in \cite{KPV2} 
by the same methods as in \cite{Mor, trost, trost2}, using some results of \cite{KPV}. As in all other 
cases, the obtained bounds are in the form of absolute constants, which are explicit 
in the function case.


\section{$\mathrm K_2$ modeled on Chevalley groups}

In this section we collect the classical results on $\mathrm K_2(\Phi,\,-)$ which we will use in this paper.

There is a natural map from $\mathrm{St}(\Phi,\,R)$ to $\mathrm{G_{sc}}(\Phi,\,R)$ sending generators of the Steinberg group $x_\alpha(r)$ to elementary root unipotents $x_\alpha(r)$ of the Chevalley group, $\alpha\in\Phi$, $r\in R$. Following~\cite{stein}, we denote
$$
\mathrm K_2(\Phi,\,R)=\mathrm{Ker}\big(\mathrm{St}(\Phi,\,R)\rightarrow\mathrm G_{\mathrm{sc}}(\Phi,\,R)\big).
$$
\par
Let $R$ be a commutative ring. Following Steinberg, for
$\alpha\in\Phi$, $u\in R^*$ we define the elements
\begin{align}
\label{def-sym}
h_\alpha(u)=w_\alpha(u)w_\alpha(-1)
\end{align}
of the Steinberg group $\mathrm{St}(\Phi,R)$, where $w_\alpha(u)$ is defined in \S~1\,(\ref{def-w}),
see~\cite{Sgrc}.
\par
Further, for two invertible elements
$u,\,v\in R^*$ we define the {\bf Steinberg symbol}
$$ \{u,\,v\}_\alpha=h_{\alpha}(uv)h_\alpha(u)^{-1}h_\alpha(v)^{-1}. $$

The following fact is well-known, see, for instance,~\cite[Proposition~1.3]{Sdim0}.
\begin{prop}
\label{sym-cent}
For a ring $R$ and a reduced irreducible root system $\Phi$ elements $\{u,\,v\}_\alpha$ for $u,\,v\in R^*$, $\alpha\in\Phi$, are central in $\mathrm{St}(\Phi,\,R)$ and belong to $\mathrm K_2(\Phi,\,R)$.
\end{prop}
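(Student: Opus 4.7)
\textbf{Proof proposal for Proposition \ref{sym-cent}.} The plan is to proceed in two steps: first verify that $\{u,v\}_\alpha$ lies in $\mathrm K_2(\Phi,R)$ by reducing to the well-known multiplicativity of the $h_\alpha$ in the Chevalley group, then establish centrality by explicit conjugation against the generators $x_\beta(r)$.

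For the $\mathrm K_2$ membership, I would simply push everything forward along the projection $\pi\colon \mathrm{St}(\Phi,R)\to \mathrm G_{\mathrm{sc}}(\Phi,R)$. The image $\pi(h_\alpha(u))$ is the standard torus element, and in the Chevalley group one has $h_\alpha(uv)=h_\alpha(u)h_\alpha(v)$ (this is an immediate consequence of the parametrisation of the split maximal torus and can be read off from \cite{Steinberg}). Thus $\pi(\{u,v\}_\alpha)=1$, so $\{u,v\}_\alpha\in \mathrm K_2(\Phi,R)$.

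The centrality is more delicate, because in $\mathrm{St}(\Phi,R)$ the elements $h_\alpha(u)$ need not themselves be multiplicative. The key input is the conjugation identity, valid already in the Steinberg group,
$$
h_\alpha(u)\, x_\beta(r)\, h_\alpha(u)^{-1} \;=\; x_\beta\bigl(u^{\langle\beta,\alpha^\vee\rangle}\, r\bigr),\qquad \alpha\in\Phi,\ \beta\in\Phi,\ u\in R^*,\ r\in R,
$$
which one derives from the analogous formula $w_\alpha(u)\, x_\beta(r)\, w_\alpha(-u)=x_{s_\alpha\beta}(\pm u^{-\langle\beta,\alpha^\vee\rangle} r)$ together with the defining relation $h_\alpha(u)=w_\alpha(u)w_\alpha(-1)$; the signs cancel once one composes two reflections in $\alpha$. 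This step is the main obstacle: it is a genuine Steinberg-group calculation that ultimately rests on Steinberg's original relations, and for $\Phi$ containing a component of type $\mathsf A_1$ it uses the extra defining relation recorded in \S~1\,(\ref{def-w}); for the other types it follows from additivity together with the Chevalley commutator formula, by the standard reduction in a rank-two subsystem containing $\alpha$ and $\beta$.

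Granting the conjugation identity, the verification that $\{u,v\}_\alpha$ commutes with every generator $x_\beta(r)$ is a short computation. Applying the identity three times with parameters $u$, $v$, and $uv$ one obtains
$$
h_\alpha(uv)\,h_\alpha(u)^{-1}\,h_\alpha(v)^{-1}\, x_\beta(r)\;=\;x_\beta\!\left((uv)^{\langle\beta,\alpha^\vee\rangle}\,u^{-\langle\beta,\alpha^\vee\rangle}\,v^{-\langle\beta,\alpha^\vee\rangle}\,r\right)\cdot h_\alpha(uv)\,h_\alpha(u)^{-1}\,h_\alpha(v)^{-1},
$$
and the exponents of $u$ and $v$ on the right collapse to $r$. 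Since the $x_\beta(r)$ generate $\mathrm{St}(\Phi,R)$, this shows $\{u,v\}_\alpha$ is central, completing the proof.
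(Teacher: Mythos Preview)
Your argument is correct and is essentially the standard one. The paper itself does not supply a proof of this proposition; it simply cites \cite[Proposition~1.3]{Sdim0} as a reference for this well-known fact, so there is no ``paper's proof'' to compare against beyond that citation. Your two-step approach --- multiplicativity of $h_\alpha$ in $\mathrm G_{\mathrm{sc}}(\Phi,R)$ for the $\mathrm K_2$-membership, then the conjugation formula $h_\alpha(u)\,x_\beta(r)\,h_\alpha(u)^{-1}=x_\beta(u^{\langle\beta,\alpha^\vee\rangle}r)$ in $\mathrm{St}(\Phi,R)$ for centrality --- is exactly the classical route taken in Stein's and Steinberg's treatments, so your proposal is in line with what the cited source does.
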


The following classical result is due to Matsumoto~\cite[Corollaire~5.11]{mats}.
\begin{prop}
\label{mats} Let $k$ be a field, $\Phi$ be a reduced irreducible root system.\\
{\rm 1)} The group $\mathrm K_2(\Phi,\,k)$ is generated by $\{u,\,v\}_\alpha$ for any fixed {\rm long} root $\alpha\in\Phi$, and all $u,\,v\in R^*$.\\
{\rm 2)} Let $\Phi$ be a non-symplectic reduced irreducible root system {\rm(}i.e., $\Phi\neq\mathsf A_1,\,\mathsf B_2,\,\mathsf C_l${\rm)}. Consider any embedding $\mathsf A_2\hookrightarrow\Phi$ on {\rm long} roots. Then the induced map
$$
\mathrm K_2(\mathsf A_2,\,k)\rightarrow\mathrm K_2(\Phi,\,k)
$$
is in fact an isomorphism.\\
{\rm 3)} For a symplectic reduced irreducible root system $\mathsf C_1=\mathsf A_1$, $\mathsf C_2=\mathsf B_2$, or $\mathsf C_l$, $l\geq3$, consider any embedding $\mathsf A_1\hookrightarrow\mathsf C_l$ on {\rm long} roots. Then the induced map
$$
\mathrm K_2(\mathsf A_1,\,k)\rightarrow\mathrm K_2(\mathsf C_l,\,k)
$$
is in fact an isomorphism.
\end{prop}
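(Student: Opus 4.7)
The plan is to invoke Matsumoto's presentation theorem for the unstable $\mathrm K_2$-functor of a split simply connected semisimple group over a field $k$. Matsumoto introduces an abstract ``symbol group'' $A_\Phi(k)$, defined by generators $c(u,v)$ for $u,v\in k^*$ subject to bilinearity and a Steinberg-type relation (namely $c(u,1-u)=1$ in the non-symplectic case, with a slight twist in the symplectic case), and constructs a surjection $A_\Phi(k)\twoheadrightarrow\mathrm K_2(\Phi,k)$ sending $c(u,v)$ to the Steinberg symbol $\{u,v\}_\alpha$ for a fixed long root $\alpha$. The heart of his argument is to construct an inverse homomorphism via a section over a Bruhat-type decomposition of $\mathrm{St}(\Phi,k)$, and thereby show this surjection is an isomorphism.

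Granting this presentation, part~1) is essentially automatic: the generators $c(u,v)$ of $A_\Phi(k)$ are by construction mapped to $\{u,v\}_\alpha$, so these symbols generate $\mathrm K_2(\Phi,k)$. That one may fix any single long root follows because the Weyl group $W(\Phi)$ acts transitively on long roots, and conjugating a symbol by a lift of an element of $W$ sends $\{u,v\}_\alpha$ to $\{u,v\}_{w(\alpha)}$ up to correction by further symbols; combining this with centrality (Proposition~\ref{sym-cent}) lets one recover all long-root symbols from those attached to one fixed $\alpha$.

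For parts~2) and 3), the idea is to match presentations. An embedding $\mathsf A_2\hookrightarrow\Phi$ on long roots in the non-symplectic case, respectively $\mathsf A_1\hookrightarrow\mathsf C_l$ on long roots in the symplectic case, sends Steinberg symbols to Steinberg symbols, and hence induces a map between the corresponding symbol groups $A_{\mathsf A_2}(k)\to A_\Phi(k)$, resp.\ $A_{\mathsf A_1}(k)\to A_{\mathsf C_l}(k)$. In the non-symplectic case both sides identify with the Milnor group $\mathrm K_2^{\mathrm M}(k)$, since Matsumoto's defining relations in $A_\Phi(k)$ coincide with those of $\mathrm K_2^{\mathrm M}(k)$ regardless of $\Phi$. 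In the symplectic case both identify with a symplectic symbol group (morally the degree~$2$ Milnor--Witt group $\mathrm K_2^{\mathrm{MW}}(k)$, as the paper also remarks). In either case the induced map between symbol groups is manifestly an isomorphism, and applying Matsumoto's theorem on both sides yields the stated isomorphism on $\mathrm K_2$.

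The main obstacle is, unsurprisingly, Matsumoto's presentation theorem itself: constructing the inverse $\mathrm K_2(\Phi,k)\to A_\Phi(k)$ requires a long and delicate verification that the $\{u,v\}_\alpha$ satisfy precisely the defining relations of $A_\Phi(k)$ and no more, which in turn demands a careful analysis of commutation identities among the elements $h_\alpha(u)$ and $w_\alpha(u)$ inside $\mathrm{St}(\Phi,k)$, together with a Bruhat-type normal-form argument. Once that foundational computation is in hand, however, the three statements of the Lemma become formal consequences.
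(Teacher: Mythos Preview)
Your proposal is correct and aligns with the paper's treatment: the paper does not give its own proof of this statement but simply cites it as Matsumoto's classical result \cite[Corollaire~5.11]{mats}, and the accompanying remark records exactly the identification you describe (namely that $\mathrm K_2(\Phi,k)$ is $\mathrm K_2^{\mathrm M}(k)$ in the non-symplectic case and $\mathrm K_2^{\mathrm{MW}}(k)$ in the symplectic case). Your sketch of Matsumoto's symbol-group presentation and the resulting comparison of presentations is an accurate summary of the cited argument.
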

\begin{rk}
In fact, Matsumoto describes $\mathrm K_2(\Phi,\,k)$ in terms of generators and relations in~\cite[Corollaire~5.11]{mats}. In modern terms, Matsumoto proved that $\mathrm K_2(\Phi,\,k)$ coincides with Milnor $\mathrm K_2^{\mathrm M}(k)$ for non-symplectic $\Phi$ and with Milnor--Witt $\mathrm K_2^{\mathrm{MW}}(k)$ for symplectic $\Phi$. However, we will not need the explicit description of relations in this paper.
\end{rk}

We will also need the following stabilisation results. The next statement is a particular case of the Suslin--Tulenbaev theorem, see~\cite[Corollary~4.2]{ST}.

\begin{prop}
\label{st}
Let $R$ be a Dedekind domain, then the natural map
$$
\mathrm K_2(\mathsf A_l,\,R)\rightarrow\mathrm K_2(\mathsf A_{l+1},\,R)
$$
is surjective for $l\geq2$ and injective for $l\geq3$.
\end{prop}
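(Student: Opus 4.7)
The statement is Corollary~4.2 of~\cite{ST}, which is the Suslin--Tulenbaev $\mathrm K_2$-stability theorem specialised to a Dedekind domain, whose Bass stable rank equals $2$. The general principle asserts that for any commutative ring $R$ with $\mathrm{sr}(R)=d$, the map $\mathrm K_2(\mathsf A_{n-1},R)\to\mathrm K_2(\mathsf A_n,R)$ is surjective for $n\ge d+1$ and injective for $n\ge d+2$. Taking $n=l+1$ and $d=2$ gives exactly the bounds $l\ge 2$ and $l\ge 3$ claimed in the proposition. My plan is therefore to follow the Suslin--Tulenbaev strategy, which I now sketch.

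I would identify $\mathrm K_2(\mathsf A_m,R)$ with $\ker(\St(\mathsf A_m,R)\to\SL_{m+1}(R))$ and fix the standard Steinberg generators $x_{ij}(r)$, $1\le i\ne j\le m+1$. For surjectivity ($l\ge 2$), given a word $w$ in the $x_{ij}(r)$ with indices in $\{1,\dots,l+2\}$ representing a class in $\mathrm K_2(\mathsf A_{l+1},R)$, the plan is to rewrite $w$, modulo Steinberg relations, as a word whose indices stay in $\{1,\dots,l+1\}$. The reduction proceeds inductively: each factor $x_{i,l+2}(r)$ or $x_{l+2,j}(r)$ is replaced by a commutator of two generators via the Chevalley formula, and the stable rank $2$ condition is then used to trim the unimodular rows arising in the image matrix so that the last coordinate can be cleared by conjugation back inside $\St(\mathsf A_l,R)$.

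For injectivity ($l\ge 3$), given $y\in\St(\mathsf A_l,R)$ that maps trivially in $\St(\mathsf A_{l+1},R)$, I would aim to show that $y$ is already trivial in $\St(\mathsf A_l,R)$. The technical core is Tulenbaev's prestabilisation lemma, which provides a procedure for rewriting any relation of the Steinberg type valid in the larger group as a consequence of relations in the smaller one, provided enough auxiliary indices are available for the intermediate conjugations. The elementary moves in this procedure are mediated by the $h_\alpha(u)$ and $w_\alpha(u)$ from~(\ref{def-sym}) and~(\ref{def-w}); the inequality $l+1\ge\mathrm{sr}(R)+2=4$ is precisely what guarantees that the required auxiliary indices exist.

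The main obstacle is the injectivity: surjectivity follows from a relatively formal application of Suslin's normality technique for $\mathrm E_n(R)$ upgraded one level, whereas the injectivity rests on the intricate Tulenbaev lemma, whose verification forms the technical heart of~\cite{ST}. In practice one simply invokes Corollary~4.2 of~\cite{ST} directly, as the authors do here, since no genuine simplification of the Suslin--Tulenbaev argument is available in the Dedekind case.
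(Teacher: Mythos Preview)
Your proposal is correct and matches the paper's approach exactly: the paper does not give an independent proof but simply cites \cite[Corollary~4.2]{ST} as a particular case of the Suslin--Tulenbaev theorem. Your additional sketch of the stable-rank reduction and Tulenbaev's prestabilisation lemma is accurate background, but goes beyond what the paper itself provides.
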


For the root systems other than $\mathsf A_l$ we
only have the surjective stability part, established by 
Stein~\cite[Corollary~3.2, Theorem~4.1]{stein}.
\begin{prop} 
\label{stein}
Let $R$ be a Dedekind domain. Consider the following embeddings of root systems $\Psi\hookrightarrow\Phi$:
\begin{itemize}
\item
natural embedding $\mathsf D_l\hookrightarrow\mathsf D_{l+1}$ for $l\geq3$;
\item
natural embedding $\mathsf E_l\hookrightarrow\mathsf E_{l+1}$ for $l=6,7$;
\item
the embedding $\mathsf D_5=\{\alpha_1,\alpha_2,\alpha_3,\alpha_4,\alpha_5\}\hookrightarrow\mathsf E_{6}$ {\rm(}with numbering according to Bourbaki~\cite[Table~V]{Bour}{\rm)}.
\end{itemize}
Then the induced map
$$
\mathrm K_2(\Psi,\,R)\rightarrow\mathrm K_2(\Phi,\,R)
$$
is surjective.
\end{prop}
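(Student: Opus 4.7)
Let $\varphi\colon\mathrm{St}(\Psi,\,R)\to\mathrm{St}(\Phi,\,R)$ be the canonical map induced by the root system embedding, and write $S=\varphi(\mathrm{St}(\Psi,\,R))$ for its image. The plan is to prove the inclusion $K_2(\Phi,\,R)\subseteq S$; up to a standard adjustment for the central kernel of $G_{\mathrm{sc}}(\Psi,\,R)\to G_{\mathrm{sc}}(\Phi,\,R)$ (a finite central subgroup that lifts to $\mathrm{St}(\Psi,\,R)$), this inclusion is equivalent to the surjectivity of $K_2(\Psi,\,R)\to K_2(\Phi,\,R)$.

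In each of the listed cases, $\Psi$ is the Levi of the maximal parabolic $P\leq G_{\mathrm{sc}}(\Phi)$ obtained by deleting a single simple root $\gamma\in\Pi(\Phi)\setminus\Pi(\Psi)$. For $\mathsf D_l\hookrightarrow\mathsf D_{l+1}$, $\mathsf E_6\hookrightarrow\mathsf E_7$ and $\mathsf D_5\hookrightarrow\mathsf E_6$ the root $\gamma$ is cominuscule and the unipotent radical $U_P$ is abelian; for $\mathsf E_7\hookrightarrow\mathsf E_8$ the coefficient of $\gamma=\alpha_8$ in the highest root equals $2$, and $U_P$ is $2$-step nilpotent with centre the root subgroup of the highest root. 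Let $U^{\pm}\leq\mathrm{St}(\Phi,\,R)$ be generated by $\{x_\alpha(r)\mid\alpha\in\Phi^{\pm}\setminus\Psi,\,r\in R\}$; the standard analysis of unipotent subgroups of the Steinberg group shows that $\pi\colon\mathrm{St}(\Phi,\,R)\to G_{\mathrm{sc}}(\Phi,\,R)$ restricts to an isomorphism of $U^{\pm}$ onto the corresponding unipotent radicals $U_{P^{\pm}}(R)$, with the same nilpotency class.

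The key step, essentially due to Stein, is a Gauss-type factorisation: every $\xi\in K_2(\Phi,\,R)$ admits a decomposition
\[
\xi \;=\; u_-\cdot s\cdot u_+ \qquad (u_\pm\in U^{\pm},\ s\in S).
\]
To prove it, one starts from an arbitrary expression of $\xi$ as a word in the generators $x_\alpha(r)$ and uses the Chevalley commutator formula to move generators with $\alpha\in\Phi^+\setminus\Psi$ to the right and with $\alpha\in\Phi^-\setminus\Psi$ to the left. The centrality of Steinberg symbols (Proposition~\ref{sym-cent}) keeps the accumulating corrections inside $S$. The residual unimodular row equations on the Levi representation $U_P$ that arise from the commutator formula are solved by invoking the Bass--Serre stable rank bound $\mathrm{sr}(R)\leq 2$ for the Dedekind $R$---equivalently, surjective $\mathrm K_1$-stability for the Levi inclusion $G_{\mathrm{sc}}(\Psi,\,R)\hookrightarrow G_{\mathrm{sc}}(\Phi,\,R)$. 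The bound on the nilpotency class of $U^{\pm}$ ensures that the rewriting terminates.

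Granted the factorisation, the conclusion is immediate: applying $\pi$ to $\xi=u_-su_+$ yields $1=\pi(u_-)\pi(s)\pi(u_+)$ in $G_{\mathrm{sc}}(\Phi,\,R)$, and uniqueness of the big-cell factorisation of the identity with respect to $P$ forces $\pi(u_{\pm})=\pi(s)=1$; injectivity of $\pi|_{U^{\pm}}$ then gives $u_{\pm}=1$ and hence $\xi=s\in S$. The main obstacle is the factorisation step itself, especially for $\mathsf E_7\hookrightarrow\mathsf E_8$, where the $2$-step nilpotency of $U^{\pm}$ requires separating ``height-$1$'' from ``height-$2$'' commutator corrections and a more delicate application of stable rank on the centre of $U^{\pm}$; in the cominuscule cases the abelianness of $U^{\pm}$ reduces the bookkeeping to an essentially linear manipulation on the Levi representation.
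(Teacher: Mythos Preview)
The paper does not prove this proposition at all: it is quoted directly from Stein~\cite[Corollary~3.2, Theorem~4.1]{stein}, so there is no ``paper's own proof'' to compare with beyond the citation.

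Your sketch is in the spirit of Stein's argument, and the \emph{conclusion} you draw from the factorisation $\xi=u_-\,s\,u_+$ is fine (apply $\pi$, use uniqueness of the big-cell decomposition of $1$, and the injectivity of $\pi$ on $U^{\pm}$). The genuine gap is the factorisation itself. Your proposed mechanism --- ``start from a word in the $x_\alpha(r)$ and use the Chevalley commutator formula to push $\Phi^+\setminus\Psi$ generators right and $\Phi^-\setminus\Psi$ generators left'' --- cannot work as stated: for $\alpha\in\Phi^+\setminus\Psi$ and $\beta=-\alpha\in\Phi^-\setminus\Psi$ there is \emph{no} Steinberg relation between $x_\alpha(r)$ and $x_{-\alpha}(s)$, so the rewriting procedure simply stalls. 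Relatedly, $U^-SU^+$ is \emph{not} all of $\mathrm{St}(\Phi,R)$ (already at the group level not every element lies in the big cell), so no purely formal word manipulation can produce such a decomposition for an arbitrary $\xi$. The appeal to ``centrality of Steinberg symbols keeps the accumulating corrections inside $S$'' is a non sequitur: the corrections coming from~(\ref{chevalley-commutator}) are root unipotents $x_{i\alpha+j\beta}(\cdot)$, not symbols.

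What actually makes Stein's proof go is not a global Gauss factorisation but a step-by-step reduction that uses the stable-range hypothesis in a precise place: one first establishes surjective $\mathrm K_1$-stability (every element of $E(\Phi,R)$ can be brought into $E(\Psi,R)$ by a controlled sequence of $U^{\pm}$-multiplications), and then lifts each of these elementary reduction steps to $\mathrm{St}(\Phi,R)$, tracking the effect on $\mathrm K_2$. Your phrase ``residual unimodular row equations on the Levi representation'' points at the right ingredient, but you have not said what these equations are, where they come from, or how $\mathrm{sr}(R)\le 2$ solves them; this is exactly the content of Stein's Theorem~4.1, and it does not reduce to commutator shuffling. If you want to give an independent proof rather than cite Stein, that reduction-and-lift argument is what you need to supply.
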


On the other hand, for Dedekind rings of arithmetic
type with infinite multiplicative groups the bounds
in surjective/injective stability can be improved by 1.
This was done by van der Kallen~\cite[Theorem~1]{vdKded}.
\begin{prop} 
\label{vdk}
Let $R$ be a Dedekind ring of arithmetic type with infinitely many units. Then the natural map
$$
\mathrm K_2(\mathsf A_l,\,R)\rightarrow\mathrm K_2(\mathsf A_{l+1},\,R)
$$
is surjective for $l\geq1$ and injective for $l\geq2$.
\end{prop}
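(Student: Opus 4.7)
This is van der Kallen's refinement of the Suslin--Tulenbaev stability theorem (Proposition \ref{st}), under the additional assumption that $R^*$ is infinite. The plan is to follow the standard scheme for proving unstable $\mathrm K_2$-stability, but to gain one index at each stability bound by exploiting the many-units hypothesis.

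First, I would set up the framework. The Steinberg group $\mathrm{St}(\mathsf A_l,\,R)$ has a presentation by generators $x_{ij}(r)$ for $1 \le i \ne j \le l+1$, subject to the standard Steinberg relations, and the natural map $\mathrm{St}(\mathsf A_l,\,R) \to \mathrm{St}(\mathsf A_{l+1},\,R)$ sends $x_{ij}(r)$ to the generator with the same indices. Surjectivity of $\mathrm K_2(\mathsf A_l,\,R) \to \mathrm K_2(\mathsf A_{l+1},\,R)$ amounts to showing that every element of the kernel $\mathrm K_2(\mathsf A_{l+1},\,R) \subset \mathrm{St}(\mathsf A_{l+1},\,R)$ can be represented by a word in the generators of $\mathrm{St}(\mathsf A_l,\,R)$; injectivity says that such a word which is trivial in $\mathrm{St}(\mathsf A_{l+1},\,R)$ is already trivial in $\mathrm{St}(\mathsf A_l,\,R)$.

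Second, the key technical input is the following strengthening of the Bass stable rank argument. While a general Dedekind domain satisfies only $SR_2$ (which is what drives Suslin--Tulenbaev), the assumption that $R^*$ is infinite unlocks the following many-units manoeuvre: given any unimodular pair $(a,\,b)$ over $R$ and any finite collection of prescribed conditions modulo ideals of $R$, one can find a unit $u \in R^*$ such that $a + ub$ is again part of a unimodular pair and simultaneously realizes the desired congruence pattern. This is the arithmetic incarnation of having ``many units'' in the sense of van der Kallen, and it is precisely what allows one to replace certain $SR_2$-reductions in the Suslin--Tulenbaev proof by $SR_1$-style ones, shaving off one index in both surjective and injective stability.

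The critical step, and likely the main obstacle, is the surjectivity at $l = 1$: pushing generators of $\mathrm K_2(\mathsf A_2,\,R)$ down into $\mathrm K_2(\mathsf A_1,\,R)$. Here one must show that every element of the kernel $\mathrm K_2(\mathsf A_2,\,R) \subset \mathrm{St}(\mathsf A_2,\,R)$ can be realized using only $x_{12}(r)$ and $x_{21}(r)$ generators. This is carried out by a Matsumoto-style computation in the spirit of Proposition \ref{mats}, using the centrality of Steinberg symbols (Proposition \ref{sym-cent}): the crucial use of infinitely many units appears in splitting a symbol $\{u,\,v\}_\alpha$ as a product of symbols realizable already over $\mathsf A_1$, which reduces to finding units satisfying several simultaneous congruence conditions --- available precisely because $R^*$ is infinite. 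Injectivity at $l = 2$ then follows by applying the same mechanism one level up, combined with the injectivity already provided by Proposition \ref{st} for $l \ge 3$ and a diagram chase relating the two levels via the centrality of $\mathrm K_2$.
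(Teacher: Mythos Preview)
The paper does not prove this proposition at all: it is simply quoted as \cite[Theorem~1]{vdKded}, with no argument given. So there is nothing in the paper to compare your sketch against.

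That said, your sketch does not actually outline van der Kallen's proof, and the surjectivity step at $l=1$ as you describe it would not work. You propose to show that every element of $\mathrm K_2(\mathsf A_2,R)$ is a product of Steinberg symbols $\{u,v\}_\alpha$ with $u,v\in R^*$, and then push those symbols down to $\mathsf A_1$. But for a Dedekind ring $R$ that is not a field, $\mathrm K_2(\mathsf A_2,R)$ is \emph{not} known a priori to be generated by Steinberg symbols; over fields this is Matsumoto's theorem (Proposition~\ref{mats}), but over rings one needs the Dennis--Stein symbols $\langle a,b\rangle$, and establishing that Steinberg symbols suffice is essentially equivalent to what you are trying to prove. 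Van der Kallen's actual argument works with the Dennis--Stein presentation and uses the ``many units'' hypothesis (a precise condition, satisfied here by a Dirichlet-type density argument) to manipulate those symbols, not merely to adjust Steinberg symbols by congruences. Your framing via ``$SR_1$-style reductions'' is in the right spirit, but the concrete mechanism you describe for $l=1$ misses the role of Dennis--Stein symbols and would leave a genuine gap.
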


Finally, we will need also another result by 
Stein claiming that surjective stability implies
centrality of $\mathrm K_2$~\cite[Theorem~5.1]{Sgrc}.

\begin{prop} 
\label{central}
Let $\Pi$ denote a set of simple roots in a reduced root system $\Phi$. For $\alpha\in\Pi$, let $\Psi\subseteq\Phi$ be the subsystem generated by $\Pi\setminus\alpha$. Then
$$
\mathrm K_2(\Phi,\,R)\cap\mathrm{Im}\big(\mathrm{St}(\Psi,\,R)\rightarrow\mathrm{St}(\Phi,\,R)\big)
$$
is a central subgroup of $\mathrm{St}(\Phi,\,R)$ for any commutative ring $R$.
\end{prop}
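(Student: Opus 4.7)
The plan is to reduce centrality of $z \in \mathrm K_2(\Phi,R) \cap \mathrm{Im}\bigl(\mathrm{St}(\Psi,R) \to \mathrm{St}(\Phi,R)\bigr)$ to verifying $[x_\gamma(r), z] = 1$ in $\mathrm{St}(\Phi,R)$ for every generator $x_\gamma(r)$. Write $z = \phi(\tilde z)$ with $\tilde z \in \mathrm{St}(\Psi,R)$, where $\phi$ denotes the natural map $\mathrm{St}(\Psi,R) \to \mathrm{St}(\Phi,R)$. I would then split the verification according to whether the coefficient of $\alpha$ in the decomposition of $\gamma$ with respect to $\Pi$ is zero (i.e.\ $\gamma \in \Psi$) or not.

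The main structural tool is the parabolic-like decomposition attached to $\alpha$. Partition $\Phi \setminus \Psi$ as $\Phi^+_\alpha \sqcup \Phi^-_\alpha$ according to the sign of the $\alpha$-coefficient, and let $U^\pm_\alpha \leq \mathrm{St}(\Phi,R)$ be the subgroup generated by $\{x_\gamma(r) : \gamma \in \Phi^\pm_\alpha,\ r\in R\}$. By the Chevalley commutator formula, for $\gamma \in \Phi^+_\alpha$ and $\beta \in \Psi$ the commutator $[x_\gamma(r), x_\beta(s)]$ expands as a product of terms $x_{i\gamma + j\beta}(\cdot)$ with $i \geq 1$, each of which again lies in $\Phi^+_\alpha$. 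Hence $U^+_\alpha$ is normalised by $\mathrm{St}(\Psi,R)$, and similarly for $U^-_\alpha$. Moreover $U^\pm_\alpha$ admits a unique ordered normal form $\prod_{\gamma \in \Phi^\pm_\alpha} x_\gamma(r_\gamma)$ in both $\mathrm{St}(\Phi,R)$ and $\mathrm G_{\mathrm{sc}}(\Phi,R)$, so $\phi$ restricts to an injection $U^\pm_\alpha \hookrightarrow \mathrm G_{\mathrm{sc}}(\Phi,R)$; in particular $\phi(U^\pm_\alpha) \cap \mathrm K_2(\Phi,R) = \{1\}$.

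For $\gamma \in \Phi^+_\alpha$ (the case $\gamma \in \Phi^-_\alpha$ is symmetric), fix a word $\tilde z = \prod_i x_{\beta_i}(s_i)$ with $\beta_i \in \Psi$. Expanding $x_\gamma(r)\,\tilde z\,x_\gamma(r)^{-1}$ term by term with the Chevalley commutator formula produces errors that lie in $U^+_\alpha$; the normalisation property allows one to collect these errors, yielding $[x_\gamma(r), \tilde z] = u$ for some $u \in U^+_\alpha$ inside $\mathrm{St}(\Phi,R)$. Applying $\phi$ and using $z \in \mathrm K_2(\Phi,R)$, we obtain $\phi(u) = [x_\gamma(r), z] \in \phi(U^+_\alpha) \cap \mathrm K_2(\Phi,R) = \{1\}$, as desired.

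The remaining case $\gamma \in \Psi$ is the technical heart, since the $\alpha$-parabolic decomposition no longer separates $x_\gamma(r)$ from $\tilde z$. My plan is to run a secondary parabolic argument inside $\mathrm{St}(\Psi,R)$: choose a second simple root $\alpha' \in \Pi \cap \Psi$ such that $\gamma$ has nonzero $\alpha'$-coefficient, decompose $\mathrm{St}(\Psi,R)$ along its $\alpha'$-parabolic, and reduce the computation to the one already handled, using that the resulting identity in $\mathrm{St}(\Psi,R)$ then maps via $\phi$ into $\mathrm{St}(\Phi,R)$. The hard part will be organising this reduction so that the low-rank degenerations of $\Psi$ (where no suitable $\alpha'$ exists) are handled by direct computation with the Steinberg relations, and ensuring that the images of the $U^{\pm}_{\alpha'}$-parts in $\mathrm{St}(\Phi,R)$ still intersect $\mathrm K_2(\Phi,R)$ trivially; this last point is where the full force of the injective-normal-form argument must be upgraded to cover the mixed situation where roots in $\Phi \setminus \Psi$ also contribute.
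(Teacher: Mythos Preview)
The paper does not prove this statement; it simply cites Stein~\cite[Theorem~5.1]{Sgrc}. Your Case~1 argument (for $\gamma\notin\Psi$) is correct and is indeed the heart of Stein's proof: conjugation by $z=\phi(\tilde z)$ preserves $U^{\pm}_\alpha$, and $U^{\pm}_\alpha$ injects into $\mathrm G_{\mathrm{sc}}(\Phi,R)$, so the commutator $[x_\gamma(r),z]\in U^{\pm}_\alpha\cap\mathrm K_2(\Phi,R)$ must vanish.

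The gap is in Case~2. Your secondary parabolic idea cannot work as written: the element $\tilde z$ is an \emph{arbitrary} element of $\mathrm{St}(\Psi,R)$, with no reason to lie in the image of any proper sub-Steinberg group $\mathrm{St}(\Psi',R)$, so choosing $\alpha'\in\Psi$ does not place $\tilde z$ on the Levi side of an $\alpha'$-parabolic. In rank~$2$ the obstruction is already fatal: $\Psi$ is of type $\mathsf A_1$, the only candidate for $\alpha'$ is the unique simple root of $\Psi$, and the corresponding Levi is trivial, forcing $z=1$, which is certainly not given.

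The correct resolution (and Stein's) is that Case~2 is unnecessary: for $\Phi$ irreducible, the root subgroups $x_\gamma(r)$ with $\gamma\in\Phi\setminus\Psi$ already generate all of $\mathrm{St}(\Phi,R)$. Indeed, $w_\alpha(1)=x_\alpha(1)x_{-\alpha}(-1)x_\alpha(1)$ lies in $H=\langle x_\gamma(r):\gamma\notin\Psi\rangle$, and the Steinberg relations give $w_\alpha(1)\,x_\beta(r)\,w_\alpha(1)^{-1}=x_{s_\alpha\beta}(\pm r)$. For a simple root $\beta$ adjacent to $\alpha$, the root $s_\alpha\beta$ has nonzero $\alpha$-coefficient, hence lies outside $\Psi$; thus $x_{\pm\beta}(r)\in H$. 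Iterating along the (connected) Dynkin diagram, using at each step the product $w_{\alpha}(1)w_{\beta_1}(1)\cdots w_{\beta_i}(1)$ to conjugate $x_{\beta_{i+1}}(r)$ to a root with nonzero $\alpha$-coefficient, one obtains all simple root subgroups in $H$, and hence $H=\mathrm{St}(\Phi,R)$. Since $z$ centralises $H$ by your Case~1, it is central.
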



\section{Stable linear $\mathrm K_2$}
\label{stable-case}

Recall that
$$ \mathrm K_2(R)=\lim\limits_{l\rightarrow\infty}\mathrm K_2(\mathsf A_l,\,R) $$
is the usual stable linear 
$\mathrm K_2$-functor for any ring $R$ (cf.~\cite[Chapter~III, Section~5]{Kbook}).

For a place $v$ of a field $F$ let $\kappa_v$ denote the corresponding residue class field and
$$
\partial_v\colon\mathrm K_2(F)\rightarrow\kappa_v^*
$$
the corresponding residue homomorphism (also called {\it tame symbol}) sending generators $\{x,\,y\}_\alpha$ of $\mathrm K_2(F)$ (see Proposition~\ref{mats}) to
$$
(-1)^{v(x)v(y)}\overline{\left(\frac{y^{\,v(x)}}{x^{\,v(y)}}\right)}\in\kappa_v^*,
$$
see~\cite[Chapter~III, Lemma~6.3]{Kbook}. We will need the following result due to Christophe Soul\'e, see, for instance,~\cite[Chapter~V, Theorem~6.8]{Kbook}.

\begin{prop} 
\label{quillen}
Let $R$ be a Dedekind domain whose field of fractions $F$ is a global field. Then there is an exact sequence
$$
0\rightarrow\mathrm K_2(R)\rightarrow\mathrm K_2(F)\xrightarrow{\oplus\partial_{\mathfrak p}}\bigoplus_{\mathfrak p}(R/\mathfrak p)^*\rightarrow0,
$$
where the first arrow is induced by the natural inclusion $R\hookrightarrow F$, and the sum is taken over all non-zero prime ideals $\mathfrak p$ of $R$.
\end{prop}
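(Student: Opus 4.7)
The plan is to invoke Quillen's localisation theorem for the Dedekind domain $R$, which produces the long exact sequence of higher $\mathrm K$-groups
$$
\bigoplus_{\mathfrak p}\mathrm K_2(R/\mathfrak p)\to\mathrm K_2(R)\to\mathrm K_2(F)\xrightarrow{\partial}\bigoplus_{\mathfrak p}\mathrm K_1(R/\mathfrak p)\to\mathrm K_1(R)\to\mathrm K_1(F),
$$
with the direct sums ranging over the non-zero prime ideals $\mathfrak p$ of $R$. Since $R$ is of arithmetic type, every residue ring $R/\mathfrak p$ is a finite field, so Quillen's computation of the $\mathrm K$-theory of finite fields yields $\mathrm K_2(R/\mathfrak p)=0$ and $\mathrm K_1(R/\mathfrak p)=(R/\mathfrak p)^*$. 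Substituting these identifications and prepending a trivial zero produces the exact sequence
$$
0\to\mathrm K_2(R)\to\mathrm K_2(F)\xrightarrow{\partial}\bigoplus_{\mathfrak p}(R/\mathfrak p)^*\to\mathrm K_1(R)\to\mathrm K_1(F).
$$

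The second step is to identify the connecting homomorphism $\partial$ with the direct sum $\bigoplus_{\mathfrak p}\partial_{\mathfrak p}$ of tame symbols appearing in the statement. By Matsumoto's theorem $\mathrm K_2(F)$ is generated by Steinberg symbols $\{x,y\}$, so it suffices to verify the formula on these generators; this is the classical tame-symbol calculation, carried out for instance in Chapter III, Lemma~6.3 of Weibel's $\mathrm K$-book.

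It remains to establish surjectivity of $\partial$. From the exact sequence above, the cokernel of $\partial$ is canonically isomorphic to the kernel of the map $\mathrm K_1(R)\to\mathrm K_1(F)=F^*$. The determinant provides a canonical splitting $\mathrm K_1(R)=R^*\oplus SK_1(R)$, and because $R$ is a subring of the domain $F$ the induced map $R^*\hookrightarrow F^*$ is injective; hence the kernel in question is precisely $SK_1(R)$. For a Dedekind ring of arithmetic type one has $SK_1(R)=0$: in the number case this is the Bass--Milnor--Serre theorem, and in the function case an analogous statement is due to Bass and Tate. Consequently $\partial$ is surjective and the four-term exact sequence of the proposition follows.

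The heart of the argument is thus Quillen's localisation theorem combined with the vanishing $\mathrm K_2(\mathbb F_q)=0$; the main delicate points are the identification of $\partial$ with the sum of tame symbols and the triviality of $SK_1(R)$ for Dedekind rings of arithmetic type, and it is precisely at this second step that the global-field hypothesis on $F$ is actually used.
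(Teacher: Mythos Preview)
The paper does not prove this proposition at all: it is quoted as a known result attributed to Soul\'e, with a reference to Weibel's $\mathrm K$-book, Chapter~V, Theorem~6.8. Your argument via Quillen's localisation sequence for the Dedekind domain $R$, combined with $\mathrm K_2(\mathbb F_q)=0$ and the identification of the boundary map with the tame symbols, is precisely the standard way to unpack that citation, and it is correct.

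Two small points are worth tightening. First, the hypothesis of the proposition is only that $R$ is a Dedekind domain whose fraction field $F$ is global, which is weaker than ``$R$ is of arithmetic type'' in the sense of the paper (there $S$ is required to be finite). This does not affect the argument, since any such $R$ is an overring of the full ring of integers $\mathcal O_F$, so its residue fields are still finite; but you should phrase the first step accordingly. Second, your concluding remark that the global-field hypothesis is used \emph{precisely} at the $SK_1$ step is not quite right: you already invoked it to get finite residue fields and hence $\mathrm K_2(R/\mathfrak p)=0$, which is what makes $\mathrm K_2(R)\to\mathrm K_2(F)$ injective. In fact the vanishing of $SK_1$ holds for \emph{every} Dedekind domain (this is due to Bass and does not need arithmetic input; the Bass--Milnor--Serre theorem you cite is the much deeper relative statement about congruence subgroups). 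So the localisation sequence always breaks into short exact sequences at this spot, and the global-field assumption is genuinely used only to kill $\mathrm K_2$ of the residue fields.
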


Our proof of Theorem B heavily relies on the following
classical result. In the function case
this is due to Bass and 
Tate~\cite[Chapter~II, Theorem~2.1]{BT}.
In the number case this was first established by Garland~\cite{garland} by analytic methods (see also~\cite[Chapter~II, Remark after Theorem~2.1]{BT}).

\begin{prop} 
\label{tate-garland}
Let $F$ be a global field. Then
$$
\mathrm H_2=\mathrm{Ker}\left(\mathrm K_2(F)\xrightarrow{\oplus\partial_v}\bigoplus\kappa_v^*\right),
$$
where the sum is taken over all {\rm finite} places of $F$, is finite.
\end{prop}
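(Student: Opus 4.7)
The plan is to reduce the claim to finiteness of the stable linear $\mathrm K_2$ of an appropriate Dedekind subring of $F$, and then invoke the classical finiteness theorems in each characteristic. In the number field case the reduction is immediate: I would take $R=\mathcal O_F$, the full ring of integers of $F$, so that the non-zero prime ideals of $R$ are in bijection with the finite (i.e., non-archimedean) places of $F$; Proposition~\ref{quillen} then identifies $\mathrm H_2$ literally with $\mathrm K_2(\mathcal O_F)$. In the function field case every place of $F$ is non-archimedean, so I must excise at least one to obtain a Dedekind domain: fix any place $v_0$, set $R=\mathcal O_{\{v_0\}}$, and observe that Proposition~\ref{quillen} identifies $\mathrm K_2(R)$ with $\mathrm{Ker}\bigl(\bigoplus_{v\neq v_0}\partial_v\bigr)$, so that $\mathrm H_2$ is cut out of $\mathrm K_2(R)$ by the single further map $\partial_{v_0}\colon\mathrm K_2(R)\to\kappa_{v_0}^*$. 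Since $\kappa_{v_0}$ is finite, $\mathrm H_2\subseteq\mathrm K_2(R)$ has finite index, and so in either case finiteness of $\mathrm H_2$ follows from finiteness of $\mathrm K_2$ of the relevant ring of $S$-integers.

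I would then apply the classical theorems. In the number field case this is Garland's theorem~\cite{garland}: its proof is analytic, using reduction theory for $\mathrm{SL}_n(\mathcal O_F)$ and finite-volume considerations on the symmetric space to bound the low-degree homology of the arithmetic group, which via Matsumoto's presentation translates into finiteness of $\mathrm K_2(\mathcal O_F)$. In the function field case the route is more elementary: Bass and Tate~\cite{BT} show directly, by an induction based on their structure theorem for $\mathrm K_2^M$ of a function field of transcendence degree one, that the kernel of the full tame symbol map on $\mathrm K_2^M(F)$ coincides with (the image of) $\mathrm K_2^M$ of the constant field $\mathbb F_q$; since $\mathrm K_2^M(\mathbb F_q)=0$, one in fact obtains $\mathrm H_2=0$ outright in the function case. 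Matsumoto's identification $\mathrm K_2(\mathsf A_2,F)=\mathrm K_2^M(F)$ recalled in Proposition~\ref{mats} bridges the Milnor-$\mathrm K_2$ statement and the stable linear $\mathrm K_2$ we are working with.

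The genuine obstacle is entirely on the side of Garland's theorem: the number field case rests on non-trivial analytic and reduction-theoretic input that I would not attempt to reprove, whereas the bookkeeping via Proposition~\ref{quillen} and the function field case via Bass--Tate are both routine. In the actual write-up the argument would therefore consist of the one-line identification $\mathrm H_2=\mathrm K_2(\mathcal O_F)$ (respectively the finite-index inclusion into $\mathrm K_2(\mathcal O_{\{v_0\}})$), followed by direct citations to~\cite{garland} and~\cite{BT}.
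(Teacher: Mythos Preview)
Your proposal is correct and matches the paper's treatment: the paper does not give a proof of this proposition but simply attributes it to Bass--Tate~\cite{BT} in the function field case and to Garland~\cite{garland} in the number field case, exactly the references you invoke. Your write-up additionally spells out the identification $\mathrm H_2=\mathrm K_2(\mathcal O_F)$ via Proposition~\ref{quillen}, which the paper leaves implicit here (and in fact runs in the opposite direction in the proof of Corollary~\ref{at-fin}).
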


\begin{rk}
The case $\mathrm{char}\,F=p>0$ of Proposition~\ref{tate-garland} was generalised to higher K-theory by Harder~\cite[Korollar~3.2.3]{harder}.
\end{rk}

The following corollary is perhaps also well-known, however we do not know the reference and provide an argument for the convenience of the reader.

\begin{cl}
\label{at-fin}
Let $R$ be a Dedekind ring of arithmetic type defined by the set of places $S$. Then $\mathrm K_2(R)$ is finite.
\end{cl}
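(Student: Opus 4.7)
The plan is to realize $\mathrm K_2(R)$ as an extension of two finite groups by combining Soul\'e's localization sequence (Proposition~\ref{quillen}) with the Bass--Tate/Garland finiteness result (Proposition~\ref{tate-garland}). Let $F$ be the fraction field of $R$, and let $T$ be the set of finite places of $F$ that lie in $S$; then $T$ is a finite set, and the nonzero prime ideals of $R$ correspond bijectively to the finite places of $F$ outside $S$.

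First, I would use Proposition~\ref{quillen} to identify $\mathrm K_2(R)$ with the kernel of the tame symbol map
\[
\mathrm K_2(F)\xrightarrow{\ \oplus_{v\notin S}\partial_v\ }\bigoplus_{v\notin S,\ v\text{ finite}}\kappa_v^*.
\]
Since $\mathrm H_2$ is the kernel of the tame symbol map over \emph{all} finite places of $F$, we have a chain of inclusions $\mathrm H_2\subseteq\mathrm K_2(R)\subseteq\mathrm K_2(F)$. Restricting the tame symbols at places in $T$ to $\mathrm K_2(R)$ yields a homomorphism
\[
\mathrm K_2(R)\xrightarrow{\ \oplus_{v\in T}\partial_v\ }\bigoplus_{v\in T}\kappa_v^*
\]
whose kernel is exactly $\mathrm H_2$, because an element of $\mathrm K_2(R)$ has trivial tame symbol at every finite place in $T$ \emph{and}, by definition, at every finite place outside $S$, i.e.\ at every finite place of $F$.

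Now both ends are finite: $\mathrm H_2$ is finite by Proposition~\ref{tate-garland}, while the target $\bigoplus_{v\in T}\kappa_v^*$ is a finite direct sum of finite groups (residue fields at finite places of a global field are finite) and hence finite. Thus the short exact sequence
\[
0\longrightarrow\mathrm H_2\longrightarrow\mathrm K_2(R)\longrightarrow \mathrm{Im}\!\left(\oplus_{v\in T}\partial_v\right)\longrightarrow 0
\]
exhibits $\mathrm K_2(R)$ as an extension of a finite group by a finite group, so $\mathrm K_2(R)$ itself is finite.

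There is no substantial obstacle here, since essentially everything reduces to bookkeeping with the localization sequence; the entire arithmetic content is imported through the two cited finiteness statements. The only point that requires a moment of attention is the verification that $\mathrm H_2$ lies inside $\mathrm K_2(R)$, which is immediate from comparing the index sets of the two tame symbol maps.
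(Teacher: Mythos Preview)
Your proof is correct and follows essentially the same route as the paper: identify $\mathrm K_2(R)$ inside $\mathrm K_2(F)$ via Proposition~\ref{quillen}, then sandwich it between $\mathrm H_2$ and a finite direct sum of residue-field units using Proposition~\ref{tate-garland}. The only cosmetic difference is that the paper asserts the restricted tame symbol map $\mathrm K_2(R)\to\bigoplus_{v\in T}\kappa_v^*$ is surjective (giving a short exact sequence with that group on the right), whereas you more cautiously pass to the image; since surjectivity is irrelevant for the finiteness conclusion, this changes nothing.
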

\begin{proof}
By Proposition~\ref{quillen} we get an exact sequence
$$
0\rightarrow\mathrm K_2(R)\rightarrow\mathrm K_2(F)\xrightarrow{\oplus\partial_{v}}\bigoplus_{v\not\in S}\kappa_v^*\rightarrow0.
$$
Therefore we may consider $\mathrm  K_2(R)$ as a subgroup of $\mathrm K_2(F)$, and restricting $\partial_v$ to it we get an exact sequence
$$
0\rightarrow\mathrm H_2\rightarrow\mathrm K_2(R)\xrightarrow{\oplus\partial_{v}}\bigoplus_{\substack{v\in S\\\text{finite}}}\kappa_v^*\rightarrow0,
$$
where $\mathrm H_2$ is the group from Proposition~\ref{tate-garland}. However, $S$ is a finite set, $\kappa_v$ is a finite field for a finite place $v$, and $\mathrm H_2$ is finite by Proposition~\ref{tate-garland}.
\end{proof}

We will denote by $I(k)$ the fundamental ideal of the Witt ring of symmetric bilinear forms $W(k)$ of a field $k$, see~\cite{milnor-husemoller}. Following~\cite{suslin, morita-rehmann} we denote $\mathrm K_2\mathrm{Sp}(R)=\lim\limits_l\mathrm K_2(\mathsf C_l,\,R)$. We will need the following result due to Suslin~\cite[Theorem~6.5]{suslin}.

\begin{prop} 
\label{kmw}
For any field $k$ there is an exact sequence
$$
0\rightarrow I^3(k)\rightarrow\mathrm{K_2Sp}(k)\rightarrow\mathrm K_2(k)\rightarrow 0.
$$
\end{prop}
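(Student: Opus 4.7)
The plan is to use Matsumoto's explicit presentation to reduce the statement to a known short exact sequence comparing Milnor--Witt and Milnor $K$-theory, and then to identify the kernel via quadratic-form invariants.

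First, I would set up the map. By Matsumoto's theorem, recorded in the remark following Proposition~\ref{mats}, one has canonical isomorphisms $\mathrm K_2(\mathsf C_l,\,k) \cong \mathrm K_2^{\mathrm{MW}}(k)$ for all $l \geq 1$ and $\mathrm K_2(\mathsf A_l,\,k) \cong \mathrm K_2^{\mathrm M}(k)$ for all $l \geq 2$. Passing to the direct limit yields $\mathrm K_2\mathrm{Sp}(k) \cong \mathrm K_2^{\mathrm{MW}}(k)$ and $\mathrm K_2(k) \cong \mathrm K_2^{\mathrm M}(k)$. The natural map in the statement is induced by the embedding $\mathsf C_l \hookrightarrow \mathsf A_{2l-1}$ of root systems along long roots, which at the level of Steinberg symbols is the identity. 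Comparing the Matsumoto presentations for the two root systems, this map is identified with the canonical projection $\mathrm K_2^{\mathrm{MW}}(k) \twoheadrightarrow \mathrm K_2^{\mathrm M}(k)$ that kills the extra Milnor--Witt generator $\eta$ (equivalently, imposes the full Steinberg relation $\{u,\,1-u\}=0$ present in Milnor $K$-theory but only partially so symplectically). Surjectivity is then immediate, and the proof reduces to identifying the kernel with $I^3(k)$.

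Second, I would construct a Pfister invariant $e_2 \colon \mathrm K_2^{\mathrm{MW}}(k) \to I^2(k)$ sending a symbol $\{u,\,v\}$ to the class of the Pfister form $\langle 1, -u\rangle \otimes \langle 1, -v\rangle$, and verify that it is well-defined using the Matsumoto presentation in the symplectic case --- the symplectic Steinberg relations are exactly the relations among two-fold Pfister forms in $I^2(k)$. The crucial claim is that the square
$$
\begin{CD}
\mathrm K_2^{\mathrm{MW}}(k) @>>> \mathrm K_2^{\mathrm M}(k) \\
@VVV @VVV \\
I^2(k) @>>> I^2(k)/I^3(k)
\end{CD}
$$
is cartesian, where the right vertical arrow is reduction modulo $2$ and the bottom arrow is the Merkurjev isomorphism $\mathrm K_2^{\mathrm M}(k)/2 \cong I^2(k)/I^3(k)$. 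Taking horizontal kernels in this pullback square then identifies
$$
\mathrm{Ker}\bigl(\mathrm K_2^{\mathrm{MW}}(k) \to \mathrm K_2^{\mathrm M}(k)\bigr) \;\cong\; \mathrm{Ker}\bigl(I^2(k) \to I^2(k)/I^3(k)\bigr) \;=\; I^3(k),
$$
which completes the proof.

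The main obstacle is verifying the cartesian property of the square. This ultimately rests on Merkurjev's deep theorem on the norm-residue symbol in weight two, which is exactly what controls the $2$-torsion relating $\mathrm K_2^{\mathrm M}(k)$ to the fundamental filtration on the Witt ring; without this input one cannot match the symplectic Steinberg symbols with the Pfister classes they represent and the kernel can only be described generator-by-generator.
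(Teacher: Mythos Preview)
The paper does not actually prove this proposition: it is stated as a result of Suslin and simply cited as \cite[Theorem~6.5]{suslin}. So there is no in-paper argument to compare against; your task was effectively to reconstruct (or re-interpret) Suslin's proof.

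Your sketch is a reasonable modern outline and is essentially the Milnor--Witt reformulation of Suslin's result. Two comments. First, the parenthetical description of the projection $\mathrm K_2^{\mathrm{MW}}(k)\to\mathrm K_2^{\mathrm M}(k)$ is slightly off: the Steinberg relation $\{u,1-u\}=1$ already holds in the symplectic (Milnor--Witt) presentation; what distinguishes the two is bilinearity in each variable (Milnor) versus the weaker ``symplectic'' cocycle relation (Milnor--Witt), or equivalently setting $\eta=0$. Second, you are right that the cartesian-square step is where all the content lies. Proving that $\mathrm K_2^{\mathrm{MW}}(k)\cong \mathrm K_2^{\mathrm M}(k)\times_{I^2/I^3}I^2(k)$ requires, beyond Merkurjev's theorem, checking injectivity of the comparison map --- i.e.\ that an element dying in both $\mathrm K_2^{\mathrm M}(k)$ and $I^2(k)$ already vanishes in $\mathrm K_2^{\mathrm{MW}}(k)$. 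This is not a formal consequence of Merkurjev alone and is precisely what Suslin establishes; your sketch correctly flags this as the hard point but does not indicate how it is handled.
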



\section{
Comparison theorems for $\mathrm K_2$: proof of theorem B}  \label{secBC}

\subsection{Simply laced root systems}

The aim of this section is to prove the following result, which, together with Theorem A (or, in fact, already with its special case, Theorem E), implies Theorem B.
We believe it is very interesting in its own right, and
may have further applications.

\begin{maintheorem}
\label{result}
Let $R$ be a Dedekind ring of arithmetic type, and let $\Phi\neq\mathsf A_1$ be a simply laced reduced irreducible root system {\rm(}i.e., $\Phi=\mathsf A_l,\,\mathsf D_l,\,\mathsf E_l$, $l\neq1${\rm)}. Assume additionally that
$$
\text{either }\quad \mathrm{rk}(\Phi)\geq3\quad \text{ or }\quad R\,\text{ has infinitely many units}.
$$
Then $\mathrm K_2(\Phi,\,R)$ is a central subgroup of $\mathrm{St}(\Phi,\,R)$, and, moreover,
$$
\mathrm K_2(\Phi,\,R)=\mathrm K_2(R),
$$
in particular, $\mathrm K_2(\Phi,\,R)$ is finite.
\end{maintheorem}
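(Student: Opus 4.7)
\emph{The plan of the proof.} The overall strategy is to reduce the whole statement to the identification $\mathrm K_2(\mathsf A_3,R)=\mathrm K_2(R)$ supplied by Suslin--Tulenbaev stability (Lemma~\ref{st}). Concretely, for $\mathrm{rk}(\Phi)\geq3$ I will show that the natural map
$$
f\colon\mathrm K_2(\mathsf A_3,R)\longrightarrow\mathrm K_2(\Phi,R)
$$
induced by any embedding $\mathsf A_3\hookrightarrow\Phi$ on simple roots is an isomorphism; the remaining case $\Phi=\mathsf A_2$ with $R^*$ infinite is already contained in van der Kallen's Lemma~\ref{vdk}, giving $\mathrm K_2(\mathsf A_2,R)=\mathrm K_2(R)$ directly. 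Finiteness of $\mathrm K_2(\Phi,R)$ will then follow immediately from the identification with $\mathrm K_2(R)$ together with Corollary~\ref{at-fin}.

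\emph{Surjectivity} of $f$ comes from iterating the surjective $\mathrm K_2$-stability theorems: for $\Phi=\mathsf A_l$ one applies Lemma~\ref{st} directly, while for $\Phi$ of type $\mathsf D$ or $\mathsf E$ one exhibits, using $\mathsf A_3=\mathsf D_3$, the chain
$$
\mathrm K_2(\mathsf A_3,R)=\mathrm K_2(\mathsf D_3,R)\twoheadrightarrow\cdots\twoheadrightarrow\mathrm K_2(\mathsf D_5,R)\twoheadrightarrow\mathrm K_2(\mathsf E_6,R)\twoheadrightarrow\mathrm K_2(\mathsf E_7,R)\twoheadrightarrow\mathrm K_2(\mathsf E_8,R),
$$
every arrow of which is surjective by Stein's Lemma~\ref{stein}. \emph{Injectivity} of $f$ is extracted from the naturality square
$$
\begin{CD}
\mathrm K_2(\mathsf A_3,R) @>f>> \mathrm K_2(\Phi,R)\\
@VVV @VVV\\
\mathrm K_2(\mathsf A_3,F) @>g>> \mathrm K_2(\Phi,F)
\end{CD}
$$
where $F$ is the fraction field of $R$. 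Matsumoto's Lemma~\ref{mats} makes both $\mathrm K_2(\mathsf A_2,F)\to\mathrm K_2(\mathsf A_3,F)$ and $\mathrm K_2(\mathsf A_2,F)\to\mathrm K_2(\Phi,F)$ into isomorphisms, hence so is $g$; under the identification $\mathrm K_2(\mathsf A_3,R)=\mathrm K_2(R)$ the left vertical becomes the Quillen--Soul\'e injection $\mathrm K_2(R)\hookrightarrow\mathrm K_2(F)$ of Lemma~\ref{quillen}. The clockwise composition is therefore injective, which forces $f$ to be so.

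\emph{Centrality} of $\mathrm K_2(\Phi,R)$ in $\mathrm{St}(\Phi,R)$ combines surjective $\mathrm K_2$-stability with Stein's centrality principle (Lemma~\ref{central}): for every $\Phi$ covered by the theorem there is a simple root $\alpha\in\Pi$ such that $\Psi=\langle\Pi\setminus\alpha\rangle$ satisfies $\mathrm K_2(\Psi,R)\twoheadrightarrow\mathrm K_2(\Phi,R)$ --- take $\mathsf A_{l-1}\hookrightarrow\mathsf A_l$ via Lemma~\ref{st}, $\mathsf D_{l-1}\hookrightarrow\mathsf D_l$ ($l\geq 4$), $\mathsf D_5\hookrightarrow\mathsf E_6$ and $\mathsf E_l\hookrightarrow\mathsf E_{l+1}$ via Lemma~\ref{stein}, and $\mathsf A_1\hookrightarrow\mathsf A_2$ via Lemma~\ref{vdk}. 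This places all of $\mathrm K_2(\Phi,R)$ inside $\mathrm{Im}(\mathrm{St}(\Psi,R)\to\mathrm{St}(\Phi,R))$, so the intersection appearing in Lemma~\ref{central} is the whole of $\mathrm K_2(\Phi,R)$ and is central in $\mathrm{St}(\Phi,R)$.

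The \emph{hard part}, and the reason for the auxiliary hypothesis on the rank, is the $\mathsf A_2$ corner: the Suslin--Tulenbaev bounds in Lemma~\ref{st} are not strong enough to identify $\mathrm K_2(\mathsf A_2,R)$ with $\mathrm K_2(R)$ on their own, since surjective stability there starts at $l\geq 2$ and injective stability only at $l\geq 3$. Only van der Kallen's sharpening (Lemma~\ref{vdk}) closes the gap, and its hypothesis $R^*$ infinite therefore cannot be dropped within this strategy; likewise the restriction to simply laced $\Phi$ is forced by the failure of Matsumoto's isomorphism in the symplectic case (where $\mathrm K_2^{\mathrm{MW}}$ intervenes via Lemma~\ref{kmw}), so the diagram chase used for injectivity has no analogue beyond the $\mathsf A,\mathsf D,\mathsf E$ series.
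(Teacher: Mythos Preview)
Your proof is correct and follows essentially the same route as the paper: the paper packages the surjectivity-plus-diagram-chase argument as a separate Lemma~\ref{theorem} (existence of an $\mathsf A_3\hookrightarrow\Phi$ inducing an isomorphism on $\mathrm K_2$), then deduces centrality and finiteness exactly as you do via Lemmas~\ref{st}, \ref{central}, \ref{vdk} and Corollary~\ref{at-fin}. One cosmetic point: you should say ``there exists an embedding $\mathsf A_3\hookrightarrow\Phi$'' rather than ``any embedding on simple roots'', since your surjectivity argument singles out the specific $\mathsf A_3$ coming from the Stein chain.
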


The above theorem was probably never published,
but it may be mostly known to experts. In any case,
for $\Phi=\mathsf D_l$, $l\geq5$, it follows from
a result by Panin \cite[Theorem~6.1]{panin}. Moreover,
in the same paper Panin proves in~\cite[Theorem~9.1]{panin} a similar stabilisation result also for higher orthogonal K-theory.
\par
However, to cover also $\Phi=\mathsf D_4,\mathsf E_6,
\mathsf E_7$
and $\mathsf E_8$ we start with the following
result.

\begin{lm}
\label{theorem}
Let $R$ be a Dedekind ring of arithmetic type, and let $\Phi$ denote a simply laced reduced irreducible root system of $\mathrm{rk}(\Phi)\geq3$. Then there exists an embedding $\mathsf A_3\hookrightarrow\Phi$ such that the induced map
$$
\mathrm K_2(\mathsf A_3,\,R)\rightarrow\mathrm K_2(\Phi,\,R)
$$
is an isomorphism.
\end{lm}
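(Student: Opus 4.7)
I would prove surjectivity and injectivity of the chosen map separately, using quite different tools in the two cases.

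\emph{Choice of embedding and surjectivity.} I would realise $\mathsf A_3\hookrightarrow\Phi$ as the composite of a short chain of standard subsystem inclusions. For $\Phi=\mathsf A_l$, take the tautological inclusion. For $\Phi=\mathsf D_l$, use $\mathsf A_3=\mathsf D_3\hookrightarrow\mathsf D_4\hookrightarrow\ldots\hookrightarrow\mathsf D_l$. For $\Phi=\mathsf E_l$ with $l\in\{6,7,8\}$, extend the $\mathsf D$-chain by $\mathsf D_5\hookrightarrow\mathsf E_6\hookrightarrow\mathsf E_7\hookrightarrow\mathsf E_8$. Since $\Phi$ is simply laced, every link of such a chain is automatically on long roots. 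Surjectivity of the induced map on $\mathrm K_2$ is then obtained link by link: the $\mathsf A_l\hookrightarrow\mathsf A_{l+1}$ steps are handled by Proposition~\ref{st}, while the $\mathsf D_l\hookrightarrow\mathsf D_{l+1}$, $\mathsf D_5\hookrightarrow\mathsf E_6$ and $\mathsf E_l\hookrightarrow\mathsf E_{l+1}$ ($l=6,7$) steps are handled by Proposition~\ref{stein}. Since a composition of surjections is surjective, the whole chain-induced map is surjective.

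\emph{Injectivity.} The key idea is to detect elements of $\mathrm K_2(\mathsf A_3,R)$ after passing to the fraction field $F$ of $R$, where the map becomes one of Matsumoto's isomorphisms. Consider the commutative square
\[
\begin{CD}
\mathrm K_2(\mathsf A_3,R) @>>> \mathrm K_2(\Phi,R) \\
@VVV @VVV \\
\mathrm K_2(\mathsf A_3,F) @>>> \mathrm K_2(\Phi,F)
\end{CD}
\]
induced by $R\hookrightarrow F$ and by the embedding fixed above. The bottom arrow is an isomorphism: indeed, factor it as $\mathrm K_2(\mathsf A_3,F)\leftarrow\mathrm K_2(\mathsf A_2,F)\to\mathrm K_2(\Phi,F)$ through a long-root $\mathsf A_2$ subsystem; each factor is a Matsumoto isomorphism by Proposition~\ref{mats} (the target $\Phi$ is non-symplectic since it is simply laced). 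The left vertical arrow is injective because, by Proposition~\ref{st}, $\mathrm K_2(\mathsf A_3,R)$ coincides with the stable group $\mathrm K_2(R)$, and the latter embeds into $\mathrm K_2(F)=\mathrm K_2(\mathsf A_3,F)$ by Proposition~\ref{quillen}. An immediate diagram chase then forces the top arrow to be injective as well.

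\emph{Main obstacle and conclusion.} I do not anticipate any serious obstacle; the only delicate point is naturality, i.e.\ the commutativity of the square and the compatibility of all Matsumoto and Suslin--Tulenbaev identifications with the embedding $\mathsf A_3\hookrightarrow\Phi$ and with $R\hookrightarrow F$. This is a direct consequence of the functoriality of the Steinberg construction in both the ring argument and the root system argument, but requires some routine bookkeeping with the Steinberg symbols. Combining surjectivity and injectivity yields the claimed isomorphism $\mathrm K_2(\mathsf A_3,R)\cong\mathrm K_2(\Phi,R)$.
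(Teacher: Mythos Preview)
Your proof is correct and follows essentially the same approach as the paper: surjectivity via Stein's surjective stability (Proposition~\ref{stein}) along a chain of standard embeddings, and injectivity via the commutative square over $R\hookrightarrow F$ using Matsumoto's theorem on the bottom and the Suslin--Tulenbaev/Soul\'e injection on the left. The only difference is that you spell out the chain of embeddings and the $\mathsf A_2$-factorisation of the Matsumoto isomorphism in slightly more detail than the paper does.
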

\begin{proof}
By Proposition~\ref{stein} we conclude that there exists an embedding $\mathsf A_3\hookrightarrow\Phi$ such that the induced map
$$
\mathrm K_2(\mathsf A_3,\,R)\rightarrow\mathrm K_2(\Phi,\,R)
$$
is surjective. Let $F$ be the field of fractions of $R$. By Proposition~\ref{mats} we conclude that the induced map
$$
\mathrm K_2(\mathsf A_3,\,F)\rightarrow\mathrm K_2(\Phi,\,F)
$$
is an isomorphism. Moreover, by Proposition~\ref{st} we have $\mathrm K_2(\mathsf A_3,\,R)=\mathrm K_2(R)$, and therefore the natural map
$$
\mathrm K_2(\mathsf A_3,\,R)\rightarrow\mathrm K_2(\mathsf A_3,\,F)
$$
is injective by Proposition~\ref{quillen}. Consider the following commutative diagram:
$$
\begin{tikzcd}
\mathrm K_2(\mathsf A_3,\,R)\ar[two heads]{d}\ar[tail]{r}&\mathrm K_2(\mathsf A_3,\,F)\ar{d}{\cong}\\
\mathrm K_2(\Phi,\,R)\ar{r}&\mathrm K_2(\Phi,\,F).
\end{tikzcd}
$$
The claim follows by a simple diagram chase.
\end{proof}

Now we are all set to finish the proof of Theorem~\ref{result}.

\begin{proof}[Proof of Theorem~\ref{result}]
For $\mathrm{rk}(\Phi)\geq3$ consider the embedding $\mathsf A_3\hookrightarrow\Phi$ from Lemma~\ref{theorem} and use that $\mathrm K_2(\mathsf A_3,\,R)=\mathrm K_2(R)$ by Proposition~\ref{st} to get the equality
$$
\mathrm K_2(\Phi,\,R)=\mathrm K_2(R).
$$
To prove centrality, use the surjectivity of the map $\mathrm K_2(\mathsf A_3,\,R)\rightarrow\mathrm K_2(\Phi,\,R)$ for $\mathrm{rk}(\Phi)\geq4$ or the surjectivity of the map $\mathrm K_2(\mathsf A_2,\,R)\rightarrow\mathrm K_2(\mathsf A_3,\,R)$ from Proposition~\ref{st} together with Proposition~\ref{central}. The finiteness of $\mathrm K_2(R)$ follows from Corollary~\ref{at-fin}.

For $\Phi=\mathsf A_2$ the claim follows from Proposition~\ref{vdk} (together with Proposition~\ref{central} and Corollary~\ref{at-fin}).
\end{proof}

Theorem~B is a direct consequence of Theorem~A
and Theorem~\ref{result}.


\subsection{Multiply laced root systems}
As we see in the next section, for multiply laced
root systems $\Phi$ the equality
$$ \mathrm K_2(\Phi,\,R)=\mathrm K_2(R) $$
\noindent
may hold for some Dedekind rings $R$.
\par
However, the following counter-example shows that
it certainly fails in the symplectic case, in general.

\begin{prop}
\label{cl}
For $l\geq3$ one has
$$
\mathrm K_2(\mathsf C_l,\,\mathbb Z)\neq\mathrm K_2(\mathbb Z).
$$
\end{prop}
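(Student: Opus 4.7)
The plan is to derive the inequality by comparing with the fraction field $\mathbb{Q}$ and exploiting the non-triviality of $I^3(\mathbb{Q})$.

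First, I would apply Matsumoto's theorem (Proposition~\ref{mats}, item~3): the embedding of $\mathsf{A}_1$ on a long root gives an isomorphism $\mathrm{K}_2(\mathsf{A}_1,\mathbb{Q})\cong\mathrm{K}_2(\mathsf{C}_l,\mathbb{Q})$, identifying this group with $\mathrm{K}_2^{\mathrm{MW}}(\mathbb{Q})$. Combined with Suslin's sequence (Proposition~\ref{kmw}), this yields
\[
0\to I^3(\mathbb{Q})\to\mathrm{K}_2(\mathsf{C}_l,\mathbb{Q})\xrightarrow{\pi}\mathrm{K}_2(\mathbb{Q})\to 0.
\]
Here $I^3(\mathbb{Q})\neq 0$: the $3$-fold Pfister form $\langle\langle-1,-1,-1\rangle\rangle$ has signature $8$ at the unique real place of $\mathbb{Q}$, so it is nonzero in $W(\mathbb{Q})$ and, by construction, belongs to $I^3(\mathbb{Q})$.

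Next, I would form the naturality square
\[
\begin{tikzcd}
\mathrm{K}_2(\mathsf{C}_l,\mathbb{Z})\ar[r,"\iota"]\ar[d,"\phi"'] & \mathrm{K}_2(\mathsf{C}_l,\mathbb{Q})\ar[d,"\pi"]\\
\mathrm{K}_2(\mathbb{Z})\ar[r,hook] & \mathrm{K}_2(\mathbb{Q})
\end{tikzcd}
\]
where the bottom arrow is injective by Proposition~\ref{quillen}, and commutativity is verified on Steinberg symbols (both composites send $\{u,v\}_\alpha$ to the Milnor symbol $\{u,v\}$). If $\phi$ were an isomorphism, then $\pi\circ\iota$ would be injective; hence it suffices to produce a nonzero element $\sigma\in\mathrm{K}_2(\mathsf{C}_l,\mathbb{Z})$ whose image $\iota(\sigma)$ lies in $\ker\pi=I^3(\mathbb{Q})$ and is nonzero there. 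Since such $\iota(\sigma)$ would be killed in $\mathrm{K}_2(\mathbb{Q})$, yet $\mathrm{K}_2(\mathbb{Z})\hookrightarrow\mathrm{K}_2(\mathbb{Q})$ is injective, this would contradict the assumed equality.

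To construct $\sigma$, the natural candidates are the central symbols $\{-1,-1\}_\alpha\in\mathrm{K}_2(\mathsf{C}_l,\mathbb{Z})$ of Proposition~\ref{sym-cent} (available since $-1\in\mathbb{Z}^*$), suitably combined across long and short roots so as to realise the class of $\langle\langle-1,-1,-1\rangle\rangle$ in $I^3(\mathbb{Q})/I^4(\mathbb{Q})$ through the Milnor--Witt identification.

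The main obstacle is precisely this last step: one must bridge the integral Steinberg calculus of $\mathsf{C}_l$ with the field-level Milnor--Witt description and verify that the constructed class is detected by a real-signature invariant. As a fallback, I would instead invoke surjective $\mathrm{K}_2$-stability for symplectic groups to reduce to Suslin's explicit calculation of $\mathrm{K}_2\mathrm{Sp}(\mathbb{Z})$ in \cite{suslin}, in which the $I^3$-contribution over $\mathbb{Z}$ is visible and strictly enlarges $\mathrm{K}_2(\mathbb{Z})=\mathbb{Z}/2$; pulling this back along the stability map $\mathrm{K}_2(\mathsf{C}_l,\mathbb{Z})\twoheadrightarrow\mathrm{K}_2\mathrm{Sp}(\mathbb{Z})$ for $l\geq 3$ then yields the desired inequality.
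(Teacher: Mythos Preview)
Your first approach has a real gap. The map $\phi\colon\mathrm K_2(\mathsf C_l,\mathbb Z)\to\mathrm K_2(\mathbb Z)$ in your square is not a canonically given comparison map; you seem to be assuming it is the hypothetical isomorphism whose existence you wish to refute. But then there is no reason for the square to commute, and ``checking on Steinberg symbols'' does not help, since an arbitrary abstract isomorphism need not respect symbols. Even granting a natural $\phi$, you leave the construction of $\sigma$ unfinished: producing an integral symplectic symbol that hits a nonzero class in $I^3(\mathbb Q)$ is exactly the hard part, and your candidate $\{-1,-1\}_\alpha$ is not worked out.

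Your fallback is essentially the paper's argument, but the attribution is off. The computation $\mathrm K_2\mathrm{Sp}(\mathbb Z)=\mathbb Z$ is not in \cite{suslin} (that paper is about fields and gives the exact sequence of Proposition~\ref{kmw}); the paper cites Schlichting \cite{sch} for this. The paper's proof is then a pure cardinality argument: $\mathrm K_2(\mathbb Z)=\mathbb Z/2$ by \cite{milnor}, while by Stein's surjective stability \cite{stein} the map $\mathrm K_2(\mathsf C_l,\mathbb Z)\twoheadrightarrow\mathrm K_2\mathrm{Sp}(\mathbb Z)=\mathbb Z$ is onto for $l\ge 3$, so $\mathrm K_2(\mathsf C_l,\mathbb Z)$ is infinite. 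No diagram, no $I^3$, no symbol chase is needed. Your detour through $\mathbb Q$ and Witt invariants is morally why the inequality holds, but for $\mathbb Z$ the direct stable computation is both available and far cleaner.
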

\begin{proof}
Recall that $\mathrm K_2(\mathbb Z)=\mathbb Z/2$ (see, e.g.,~\cite[Corollary~10.2]{milnor}). However, the map $\mathrm K_2(\mathsf C_l,\,\mathbb Z)\rightarrow\mathrm K_2(\mathsf C_{l+1},\,\mathbb Z)$ is surjective by~\cite[Corollary~3.2]{stein}, and
$$
\lim_l\mathrm K_2(\mathsf C_l,\,\mathbb Z)=\mathrm{K_2Sp}(\mathbb Z)=\mathbb Z,
$$
see, e.g.,~\cite[Theorem~2.1]{sch}.
\end{proof}

Thus, there is no hope to prove the symplectic
analogue of Theorem B along the same lines. This
does not mean that bounded generation of
$\St(\mathsf C_l,R)$, $l\ge 3$, fails. But if it holds,
its proof would require some completely different ideas.


\section{$K_2$ for polynomial rings: proof of Theorem C}

In this section we consider the polynomial rings $R=\mathbb F_q[X]$ and $R=\mathbb F_q[X,\,X^{-1}]$.
Bounded generation of the Chevalley groups themselves
in these cases is proven in \cite{KPV}. On the other hand,
for these rings $\mathrm K_2(\Phi,\,R)$ is generated by
the usual Steinberg symbols $\{u,\,v\}_\alpha$, $u,\,v\in R^*$,
which allows one to explicitly calculate it.
Observe that this is rarely the case for more general Dedekind domains, where one needs higher symbols.
\par
Recall that $I(k)$ denotes the fundamental ideal of the Witt ring of symmetric bilinear forms of a field $k$ (see~\cite{milnor-husemoller}). We will use the following well-known facts.
\par
The first statement below is the [second] Steinberg theorem,
it is proven, e.g., in \cite{Steinberg} or in~\cite[Corollary~9.13]{milnor}. For the second
statement see, e.g.,~\cite[Chapter~IV, Lemma~1.5]{milnor-husemoller}.

\begin{prop}
\label{finite-fields}
Let $\mathbb F_q$ be a finite field. Then
\begin{enumerate}
\item[{\rm 1)}]
$\mathrm K_2(\mathbb F_q)=0$;
\item[{\rm 2)}]
$I^2(\mathbb F_q)=0$.
\end{enumerate}
\end{prop}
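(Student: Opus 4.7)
The plan is to handle the two parts separately, reducing each to a short computation.

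For \textbf{(1)}, I would invoke Matsumoto's presentation (cf.\ Proposition~\ref{mats}) in the particular case of a field: $\mathrm K_2(\mathbb F_q)$ is generated by the Steinberg symbols $\{u,v\}$ with $u,v\in\mathbb F_q^*$, subject to bimultiplicativity and the fundamental relation $\{u,1-u\}=1$. Since $\mathbb F_q^*$ is cyclic of order $n=q-1$, say generated by some $g$, bimultiplicativity forces $\mathrm K_2(\mathbb F_q)$ to be a cyclic quotient generated by $\{g,g\}$. Skew-symmetry $\{u,v\}=\{v,u\}^{-1}$ yields $\{g,g\}^{2}=1$, while bimultiplicativity gives $\{g,g\}^{n}=\{g^{n},g\}=1$. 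When $q$ is even, $n$ is odd, so $\gcd(n,2)=1$ and already $\{g,g\}=1$.

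When $q$ is odd, the order of $\{g,g\}$ divides $2$, and the task reduces to producing one further relation that forces it to be trivial. Here I would exploit the classical consequence $\{u,-u\}=1$ of the Steinberg relation (derivable from $\{u,1-u\}=1$ by a standard manipulation). Writing $-1=g^{(q-1)/2}$ gives $\{g,g\}^{(q+1)/2}=1$, which already settles the case $q\equiv 1\pmod 4$. For $q\equiv 3\pmod 4$ I would invoke the Steinberg relation $\{g,1-g\}=1$ directly: expressing $1-g=g^{k}$ for an appropriate $k$ yields $\{g,g\}^{k}=1$, and a short counting argument (choosing $a$ so that both $a$ and $1-a$ are non-squares, which is possible for $q\geq 7$, with the residual cases $q=3,7$ handled by a direct inspection) produces an exponent coprime to $2$. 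I expect this small-characteristic parity bookkeeping to be the one step requiring genuine care.

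For \textbf{(2)}, the fundamental ideal $I(\mathbb F_q)$ is additively generated by Pfister forms $\langle 1,-a\rangle$, and $I^{2}(\mathbb F_q)$ is additively generated by $2$-fold Pfister forms $\langle 1,-a,-b,ab\rangle$, which are $4$-dimensional. I would invoke the Chevalley--Warning theorem: any non-degenerate quadratic form over $\mathbb F_q$ in at least three variables is isotropic. Hence every $2$-fold Pfister form over $\mathbb F_q$ is isotropic, and a Pfister form that is isotropic is automatically hyperbolic; thus each such generator vanishes in the Witt ring $W(\mathbb F_q)$, giving $I^{2}(\mathbb F_q)=0$.

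The main obstacle is the parity argument in part (1) for $q\equiv 3\pmod 4$: the two generic relations $\{g,g\}^{2}=1$ and $\{g,g\}^{(q+1)/2}=1$ have common divisor equal to $2$, so one genuinely needs an additional input from a specific instance of $\{a,1-a\}=1$. In characteristic $2$ one must also be slightly careful in part (2), since the theory of symmetric bilinear forms and that of quadratic forms diverge, but the Chevalley--Warning input applied to the diagonal bilinear forms $\langle 1,a,b,ab\rangle$, combined with surjectivity of the Frobenius, again yields $I^{2}=0$.
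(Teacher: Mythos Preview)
The paper does not prove this statement at all; it merely records it as well known and points to Steinberg's lecture notes and \cite[Corollary~9.13]{milnor} for part (1) and to \cite[Chapter~IV, Lemma~1.5]{milnor-husemoller} for part (2). Your proposal therefore goes well beyond what the paper does, supplying an actual argument where the paper only gives citations.

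Your argument is correct in outline, and in fact cleaner than you seem to believe. For part~(1), the case split on $q\bmod 4$ is unnecessary: for \emph{every} odd $q$ there exists $a\in\mathbb F_q^*$ with both $a$ and $1-a$ non-squares. Indeed, if $N$ denotes the set of non-squares then $|N|=|1-N|=(q-1)/2$ and $0\notin 1-N$ (since $1$ is a square); were $N\cap(1-N)=\emptyset$, these two sets would partition $\mathbb F_q^*$, forcing $1-N=S$ and hence $1-S=N$, but $1\in S$ gives $0\in N$, a contradiction. Writing such an $a=g^i$, $1-a=g^j$ with $i,j$ odd, the Steinberg relation yields $\{g,g\}^{ij}=1$ with $ij$ odd, and together with $\{g,g\}^2=1$ you are done. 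So there is no residual bookkeeping at small $q$.

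For part~(2), your Chevalley--Warning plus ``isotropic Pfister forms are hyperbolic'' argument is fine in odd characteristic. In characteristic~$2$ the shortcut is even simpler than the one you sketch: every element of $\mathbb F_{2^n}$ is a square, so all one-dimensional forms $\langle a\rangle$ are isometric, whence $I(\mathbb F_{2^n})$ is generated by $\langle 1,1\rangle$, which is metabolic (the diagonal vector $(1,1)$ is isotropic). Thus already $I(\mathbb F_{2^n})=0$, and no appeal to Chevalley--Warning is needed there.
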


The next result is an immediate corollary of
a result by Rehmann \cite{rehmann}.

\begin{prop} 
\label{poly}
Let $R=\mathbb F_q[X]$ be the polynomial ring over a finite field, $\Phi$ {\it any} reduced irreducible root system. Then
$$
\mathrm K_2(\Phi,\,R)=\mathrm K_2(R)=0.
$$
\end{prop}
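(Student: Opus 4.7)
The plan is to reduce to the case $R=\mathbb F_q$ via Rehmann's theorem, where $\mathrm K_2(\Phi,\,\mathbb F_q)$ can be pinned down using Matsumoto's description together with the vanishing results collected in Proposition~\ref{finite-fields}.

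First I would invoke Rehmann's theorem~\cite{rehmann}, which for a field $k$ identifies $\mathrm K_2(\Phi,\,k[X])$ with $\mathrm K_2(\Phi,\,k)$ via the natural map induced by the inclusion $k\hookrightarrow k[X]$ (this map is automatically split injective by the ring retraction $X\mapsto 0$, so the real content of Rehmann's result is surjectivity). Applied to $k=\mathbb F_q$, this reduces the computation of $\mathrm K_2(\Phi,\,\mathbb F_q[X])$ to that of $\mathrm K_2(\Phi,\,\mathbb F_q)$.

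Next I would verify $\mathrm K_2(\Phi,\,\mathbb F_q)=0$ by a case split based on Matsumoto's theorem (Proposition~\ref{mats}). If $\Phi$ is non-symplectic (in particular when $\Phi=\mathsf A_l$, which is what is needed for the stable group below), then by Proposition~\ref{mats}(2)
$$ \mathrm K_2(\Phi,\,\mathbb F_q)\cong\mathrm K_2(\mathsf A_2,\,\mathbb F_q)\cong\mathrm K_2(\mathbb F_q)=0 $$
by Proposition~\ref{finite-fields}(1). If $\Phi=\mathsf A_1,\,\mathsf B_2$ or $\mathsf C_l$, then by Proposition~\ref{mats}(3) one has $\mathrm K_2(\Phi,\,\mathbb F_q)\cong\mathrm K_2(\mathsf A_1,\,\mathbb F_q)\cong\mathrm K_2\mathrm{Sp}(\mathbb F_q)$, and Suslin's exact sequence (Proposition~\ref{kmw}) reads
$$ 0\to I^3(\mathbb F_q)\to\mathrm K_2\mathrm{Sp}(\mathbb F_q)\to\mathrm K_2(\mathbb F_q)\to 0. $$
Both outer terms vanish by Proposition~\ref{finite-fields} (noting that $I^3(\mathbb F_q)\subseteq I^2(\mathbb F_q)=0$), forcing the middle term to be zero as well.

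Finally, the equality $\mathrm K_2(R)=0$ is just the special case $\Phi=\mathsf A_l$, $l\to\infty$, of what has been proved, using the definition $\mathrm K_2(R)=\lim_{l\to\infty}\mathrm K_2(\mathsf A_l,\,R)$ from Section~\ref{stable-case}. The only nontrivial ingredient is ensuring that Rehmann's homotopy-invariance statement applies uniformly across all reduced irreducible $\Phi$ in the form we want, which is where the main external work is hidden; everything else is an immediate combination of the propositions collected in the previous two sections.
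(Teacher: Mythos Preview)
Your proof is correct and follows essentially the same route as the paper: reduce to $\mathrm K_2(\Phi,\mathbb F_q)$ via Rehmann's homotopy invariance, then kill the non-symplectic case by $\mathrm K_2(\mathbb F_q)=0$ and the symplectic case via Suslin's exact sequence together with $I^3(\mathbb F_q)=0$. The paper's proof is terser and leaves the Matsumoto case split implicit, but the ingredients and logic are identical.
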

\begin{proof}
For any field $k$ the natural embedding induces an isomorphism
$$
\mathrm K_2(\Phi,\,k)\cong\mathrm K_2(\Phi,\,k[t])
$$
by~\cite[Korollar zu Satz~1]{rehmann}. It remains to use that $\mathrm K_2(\mathbb F_q)=0=I^3(\mathbb F_q)$ by Proposition~\ref{finite-fields}, and apply Proposition~\ref{kmw}.
\end{proof}

The key role in the proof of Theorem C is played
by the following observation of the second author and Sinchuk, see~\cite[Lemma~2.2]{LS}, which
in turn relies on deep results of Hurrelbrink,
Abe and Morita.

\begin{lm}  
\label{fundamental}
For an arbitrary field $k$ and a non-symplectic root system $\Phi$ there is an exact sequence of abelian groups
$$
0\rightarrow\mathrm K_2(\Phi,\,k)\rightarrow\mathrm K_2(\Phi,\,k[X,\,X^{-1}])\rightarrow k^*\rightarrow 0
$$
split by the map
$$ \{X,\,-\}_\alpha\colon
k^*\rightarrow\mathrm K_2(\Phi,\,k[X,\,X^{-1}]) $$
\noindent
for any fixed long root $\alpha\in\Phi$. In particular, the natural embedding induces an injective map
$$
\mathrm K_2(\Phi,\,k[X,\,X^{-1}])\hookrightarrow\mathrm K_2(\Phi,\,k(X)).
$$
\end{lm}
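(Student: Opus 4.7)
The plan is to reduce the statement to the classical Bass--Tate computation of Milnor \(\mathrm K_2\) for Laurent polynomial rings via Matsumoto's theorem, and then to invoke the unstable refinements of that computation due to Hurrelbrink and Abe--Morita for the Steinberg-style \(\mathrm K_2(\Phi,-)\).

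First I would use Proposition~\ref{mats}: since \(\Phi\) is non-symplectic, for every field \(F\) the group \(\mathrm K_2(\Phi,F)\) is canonically isomorphic to the Milnor group \(\mathrm K_2^{\mathrm M}(F)\), with \(\{u,v\}_\alpha\) corresponding to the Milnor symbol \(\{u,v\}\) for any long root \(\alpha\). Combined with the Rehmann homotopy invariance \(\mathrm K_2(\Phi,F)\cong\mathrm K_2(\Phi,F[X])\) cited in the proof of Proposition~\ref{poly}, this identifies both the leftmost term and the target \(\mathrm K_2(\Phi,k(X))\) of the last-displayed arrow with classical Milnor groups.

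Next I would construct the surjection onto \(k^*\) as the composition of the functorial map \(\mathrm K_2(\Phi,k[X,X^{-1}])\to\mathrm K_2(\Phi,k(X))\cong\mathrm K_2^{\mathrm M}(k(X))\) with the tame residue \(\partial_X\) at the \(X\)-adic valuation of \(k(X)\). By Proposition~\ref{sym-cent}, the symbols \(\{X,u\}_\alpha\) for \(u\in k^*\) genuinely lie in \(\mathrm K_2(\Phi,k[X,X^{-1}])\), and a direct inspection of the tame-symbol formula shows that \(\partial_X\{X,u\}_\alpha=u\); this produces both the surjectivity and the explicit splitting \(u\mapsto\{X,u\}_\alpha\). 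The main obstacle is to identify the kernel of this residue map with the image of \(\mathrm K_2(\Phi,k)\). Over the ring \(k[X,X^{-1}]\) (as opposed to a field) this cannot be read off from Matsumoto's theorem, and this is precisely where the deep computations of~\cite{abe-morita, hurrelbrink, morita-rehmann, rehmann} enter: they give a presentation of \(\mathrm K_2(\Phi,k[X,X^{-1}])\) in terms of Steinberg symbols from which the exactness of the sequence follows.

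The \emph{In particular} statement is then a formal consequence. Using the splitting to write \(\mathrm K_2(\Phi,k[X,X^{-1}])\cong\mathrm K_2(\Phi,k)\oplus k^*\), the natural map to \(\mathrm K_2(\Phi,k(X))=\mathrm K_2^{\mathrm M}(k(X))\) sends the first summand via the injection \(\mathrm K_2^{\mathrm M}(k)\hookrightarrow\mathrm K_2^{\mathrm M}(k(X))\), retracted for instance by specialisation at \(X=1\), and the second summand via \(u\mapsto\{X,u\}\), which is detected by \(\partial_X\). Since \(\partial_X\) vanishes on the first summand and separates the second, the diagonal map is injective.
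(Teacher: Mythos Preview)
Your approach is essentially the paper's: reduce the second claim to the first via Matsumoto's identification with Milnor $\mathrm K_2$ over fields, build the splitting $u\mapsto\{X,u\}_\alpha$ detected by $\partial_X$, and quote Hurrelbrink for the exact sequence itself. The ``in particular'' deduction you give is exactly the one in the paper.

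There is one genuine soft spot. Hurrelbrink's \cite[Satz~3]{hurrelbrink} gives the split exact sequence only for $\Phi\neq\mathsf G_2$, and none of the references you list supplies a full presentation of $\mathrm K_2(\mathsf G_2,k[X,X^{-1}])$ from which exactness could be read off; \cite{abe-morita} only gives generation by symbols, while \cite{rehmann} concerns $k[t]$ and \cite{morita-rehmann} the symplectic case. The paper closes this gap by a separate diagram chase: the long-root embedding $\mathsf A_2\hookrightarrow\mathsf G_2$ gives a surjection $\mathrm K_2(\mathsf A_2,k[X,X^{-1}])\twoheadrightarrow\mathrm K_2(\mathsf G_2,k[X,X^{-1}])$ by Abe--Morita's symbol generation, and the already-established injectivity into $\mathrm K_2(\mathsf A_2,k(X))$ together with Matsumoto's isomorphism $\mathrm K_2(\mathsf A_2,k(X))\cong\mathrm K_2(\mathsf G_2,k(X))$ forces that surjection to be an isomorphism, transporting the $\mathsf A_2$ exact sequence to $\mathsf G_2$. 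You should add this step rather than fold $\mathsf G_2$ into a blanket citation.
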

\begin{proof}
Since $\mathrm K_2(\Phi,\,F)=\mathrm K_2(F)$ for any field $F$ by Proposition~\ref{mats}, the second statement follows from the first one.

Indeed, the map $\mathrm K_2(k)\rightarrow\mathrm K_2(k(X))$ is injective, e.g., by Milnor's theorem~\cite[Chapter~III, Example~6.1.2, Theorem~7.4]{Kbook}, and $k^*\rightarrow\mathrm K_2(k(X))$ is injective as a splitting to the residue homomorphism $\partial_X$ corresponding to an order of the zero or the pole at $X=0$ (see Section~\ref{stable-case}, cf. also~\cite[Proof of Lemma~2.2]{LS}).

The first statement is proven for $\Phi\neq\mathsf G_2$ in~\cite[Satz~3]{hurrelbrink} (cf.~\cite[Lemma~2.2]{LS}, Proposition~\ref{mats}). For $\Phi=\mathsf G_2$ consider the following commutative diagram
$$
\begin{tikzcd}
\mathrm K_2(\mathsf A_2,\,k[X,\,X^{-1}])\ar[tail]{d}\ar[two heads]{r}&\mathrm K_2(\mathsf G_2,\,k[X,\,X^{-1}])\ar{d}\\
\mathrm K_2(\mathsf A_2,\,k(X))\ar{r}{\cong}&\mathrm K_2(\mathsf G_2,\,k(X)),
\end{tikzcd}
$$
where the horizontal arrows are induced by the natural embedding $\mathsf A_2\hookrightarrow\mathsf G_2$ as a set of long roots.

Since $\mathrm K_2(\mathsf G_2,\,k[X,\,X^{-1}])$ is generated by $\{u,\,v\}_\alpha$ for $u,\,v\in k[X,\,X^{-1}]^*$, and $\alpha\in\mathsf G_2$ a fixed long root by~\cite[Corollary~6]{abe-morita}, we conclude that the top horizontal arrow is surjective. The bottom horizontal arrow is an isomorphism by Proposition~\ref{mats}. The injectivity of left horizontal arrow is already discussed above. Therefore, a simple diagram chase shows that the top horizontal arrow is in fact an isomorphism.
\end{proof}

Modulo this results we can now summarise the
Hurrelbrink, Abe--Morita, and Morita--Rehmann
as follows.

\begin{prop}
\label{laur}
Let $R=\mathbb F_q[X,\,X^{-1}]$ be the Laurent polynomial ring over a finite field, $\Phi$ {\it any} reduced irreducible root system. Then $\mathrm K_2(\Phi,\,R)$ is a central subgroup of $\mathrm{St}(\Phi,\,R)$, and, moreover,
$$
\mathrm K_2(\Phi,\,R)=\mathrm K_2(R)=\mathbb F_q^*.
$$
\end{prop}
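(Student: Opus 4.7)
The strategy is to identify $\mathrm{K}_2(\Phi, R)$ via the split short exact sequence of Lemma~\ref{fundamental} with $k = \mathbb{F}_q$: I would show that the ``field piece'' $\mathrm{K}_2(\Phi, \mathbb{F}_q)$ vanishes, so that the sequence collapses to an isomorphism $\mathrm{K}_2(\Phi, R) \cong \mathbb{F}_q^*$ with the inverse furnished by the explicit splitting $u \mapsto \{X, u\}_\alpha$, and then deduce centrality directly from Proposition~\ref{sym-cent}.

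The first step is to establish $\mathrm{K}_2(\Phi, \mathbb{F}_q) = 0$ for every reduced irreducible $\Phi$. For non-symplectic $\Phi$, Proposition~\ref{mats}(2) identifies this group with $\mathrm{K}_2(\mathsf{A}_2, \mathbb{F}_q) = \mathrm{K}_2(\mathbb{F}_q)$, which is trivial by Proposition~\ref{finite-fields}(1). For symplectic $\Phi$, Proposition~\ref{mats}(3) reduces the calculation to $\mathrm{K}_2(\mathsf{A}_1, \mathbb{F}_q)$, and Matsumoto's presentation realises the latter as $\mathrm{K}_2\mathrm{Sp}(\mathbb{F}_q)$; the exact sequence of Proposition~\ref{kmw} then sandwiches this group between $I^3(\mathbb{F}_q)$, which vanishes because $I^2(\mathbb{F}_q) = 0$ by Proposition~\ref{finite-fields}(2), and $\mathrm{K}_2(\mathbb{F}_q) = 0$.

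For the second step I would specialise Lemma~\ref{fundamental} to $k = \mathbb{F}_q$ in the non-symplectic case. For symplectic $\Phi$ the analogous exact sequence, alluded to in the paragraph preceding the statement, is the Abe--Morita--Rehmann calculation from~\cite{abe-morita, morita-rehmann}, and yields the same conclusion. Combined with the vanishing from the previous step this gives $\mathrm{K}_2(\Phi, R) \cong \mathbb{F}_q^*$ for all $\Phi$, with generators the Steinberg symbols $\{X, u\}_\alpha$, $u \in \mathbb{F}_q^*$. Specialising to $\Phi = \mathsf{A}_l$ for $l \geq 3$ and invoking the surjective/injective stability of Proposition~\ref{st} identifies $\mathrm{K}_2(R)$ with the same group $\mathbb{F}_q^*$.

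Centrality is then immediate: every generator $\{X, u\}_\alpha$ is central in $\mathrm{St}(\Phi, R)$ by Proposition~\ref{sym-cent}, and these generate the whole of $\mathrm{K}_2(\Phi, R)$. The main delicate point I anticipate is the symplectic case, where I must ensure that the analogue of the split exact sequence of Lemma~\ref{fundamental} really does admit a splitting of the form $u \mapsto \{X, u\}_\alpha$ on a long root, so that the explicit central-symbol argument applies uniformly across all root systems; this is precisely what the Abe--Morita and Morita--Rehmann computations supply.
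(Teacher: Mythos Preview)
Your proposal is correct and follows essentially the same route as the paper: split exact sequence from Lemma~\ref{fundamental} in the non-symplectic case, the Morita--Rehmann decomposition $\mathrm K_2(\mathsf C_l,\,k[X,X^{-1}])\cong\mathrm{K_2Sp}(k)\oplus\big(k^*\oplus I^2(k)\big)$ in the symplectic case, vanishing of $\mathrm K_2(\mathbb F_q)$, $I^2(\mathbb F_q)$ and hence $\mathrm{K_2Sp}(\mathbb F_q)$ via Propositions~\ref{finite-fields} and~\ref{kmw}, and centrality from Proposition~\ref{sym-cent}. The only cosmetic difference is that for centrality the paper quotes Abe--Morita's result that $\mathrm K_2(\Phi,\,k[X,X^{-1}])$ is generated by \emph{all} Steinberg symbols $\{u,v\}_\alpha$, $u,v\in R^*$, rather than tracking the specific generators $\{X,u\}_\alpha$ through the splitting; either works, and your version is in fact more explicit.
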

\begin{proof}
For any field $k$ there is an isomorphism
$$
\mathrm K_2(\Phi,\,k[X,\,X^{-1}])\cong\mathrm K_2(k)\oplus k^{*}
$$
for $\Phi\neq\mathsf C_l$ 
by Lemma~\ref{fundamental}, and
$$
\mathrm K_2(\mathsf C_l,\,k[X,\,X^{-1}])\cong\mathrm{K_2Sp}(k)\oplus\mathrm P(k)
$$
where $\mathrm P(k)=k^*\oplus I^2(k)$ for $l\geq 1$ by~\cite[Theorem~B]{morita-rehmann}. It remains to observe that $I^2(\mathbb F_q)=0=\mathrm K_2(\mathbb F_q)$ by Proposition~\ref{finite-fields}, and therefore (using Proposition~\ref{kmw}) one has
$$
\mathrm K_2(\Phi,\,R)=\mathrm K_2(R)=\mathbb F_q^*.
$$
To prove the first statement, observe that $\mathrm K_2(\Phi,\,k[X,\,X^{-1}])$ is generated by the Steinberg
symbols $\{u,\,v\}_\alpha$ for $u,\,v\in k[X,\,X^{-1}]^*$ by~\cite[Corollary~6]{abe-morita}, in particular, it is a central subgroup of $\mathrm{St}(\Phi,\,k[X,\,X^{-1}])$ by Proposition~\ref{sym-cent}.
\end{proof}

Now Theorem~C is a direct consequence of
\cite[Theorem A and Theorem C]{KPV}
and Propositions~\ref{poly},~\ref{laur}.


\section{Concluding remarks} 
\label{sec-conj}

Here we mention some eventual generalisations of the
results of the present paper.

\par\smallskip
$\bullet$ Let $I\unlhd R$ be an ideal of $R$.
In the present paper we addressed the {\it absolute\/} case $I=R$ alone. However, it makes sense to ask similar questions for
the {\it relative\/} case, in other words for the relative elementary subgroups $E(\Phi,R,I)$ of level $I\unlhd R$.
(Unlike the absolute case, $E(\Phi,R,I)$ does not
necessarily coincide with the congruence subgroups $G(\Phi,R,I)$ of the same level.)
\par
The expectation is that for  $E(\Phi,R,I)$ one can get similar {\it uniform\/} bounds
in terms of the elementary conjugates
$$ x_{-\alpha}(r)x_{\alpha}(s)x_{-\alpha}(-r),
\qquad \alpha\in\Phi,\quad s\in I,\quad r\in R. $$
\par
Otherwise, one could look at the {\it true\/} =
{\it unrelativised\/} elementary subgroup $E(\Phi,I)$
of level $I$ generated by $x_{\alpha}(s)$,
$\alpha\in\Phi$, $s\in I$, and ask a similar question
in terms of the elementary generators of level $I$.

\begin{prob}
Establish analogues of Theorem A for the
elementary groups $E(\Phi,I)$ and $E(\Phi,R,I)$
of level $I$, with uniform bounds not depending
on either $R$ or $I$.
\end{prob}

Some partial results in this
direction for classical groups are obtained by
Sergei Sinchuk and Andrei Smolensky \cite{SiSm} and
by Pavel Gvozdevsky \cite{Gvoz23}, but their bounds
are not uniform. As a more remote goal one could think
of generalisations to birelative subgroups, see \cite{HSVZ}
for this context.



\par\smallskip
$\bullet$ It is known that bounded elementary generation is closely related to many other flavours of bounded generation, including, in particular, finite width in commutators.
\par
Namely, Alexei Stepanov \cite{Step} has discovered
that there exists a {\it universal\/} bound $L=L(\Phi)$,
depending on $\Phi$ alone, such that the commutators
$[x,y]$, $x\in G(\Phi,R)$, $y\in E(\Phi,R)$
have elementary width $\le L$ over an {\it arbitrary\/} commutative ring $R$. (Previously in \cite{SiSt} and
\cite{SV} similar results were proven for finite-dimensional
rings, with the bound $L$ depending on $\Phi$ and
dimension $\dim(R)$).
\par
Thus, for all Chevalley groups bounded elementary generation and bounded commutator width are
equivalent! Morally, this says that there are very 
few commutators in $x\in G(\Phi,R)$, not much more
than elementary generators.
\par
But of course the actual bound for commutator width
will be much smaller than the elementary width.
So far, using the results of Smolensky
\cite{Sm} we were able to prove that for a
Dedekind ring of arithmetic type $R$ with the infinite
multiplicative group $R^*$ every element of
$G_{\sic}(\Phi,R)$ is a product of not more than
4,5,6 or 7 commutators, depending on the type $\Phi$
and on whether $R$ is a number ring or a function ring,
this result is contained in \cite{KPV2}.
\par\smallskip
$\bullet$ Above,
Theorem~\ref{result} is stated only for {\it simply laced} root systems. But there is very strong evidence that
suggests that the same is true for all {\it non-symplectic} root systems. We strongly believe in the following
statement and are tempted to call it a conjecture.
\begin{prob}
\label{conj1}
Let $R$ be the Dedekind ring of arithmetic type with infinitely many units and $\Phi$ be a reduced irreducible non-symplectic root system {\rm(}i.e., $\Phi\neq\mathsf A_1,\,\mathsf B_2,\,\mathsf C_l${\rm)}. Then $\mathrm K_2(\Phi,\,R)$ is a finite central subgroup of $\mathrm{St}(\Phi,\,R)$.
\par
In particular, $\mathrm{St}(\Phi,\,R)$ is boundedly generated by the set $X=\{x_\alpha(r)\mid r\in R,\ \alpha\in\Phi\}$.
\end{prob}

\begin{rk}
The centrality of $\mathrm K_2(\Phi,\,R)$ in fact holds for any commutative ring $R$ and any reduced irreducible root system $\Phi$ of rank at least $3$. This result was first proven in~\cite{vdK-another} for $\Phi=\mathsf A_l$, and then in~\cite{L-another,S-amalgams,LS-another,V-unitary,LSV-centrality} for the other root systems. However, if $\Phi$ has rank $2$ then as shown in~\cite{wendt}
centrality may fail even for some very nice rings.
\end{rk}

As Proposition~\ref{cl} shows, one cannot expect an analogue of this to hold in the symplectic case.
However, one can still hope that $\mathrm{St}(\mathsf C_l,\,R)$, $l\ge 3$, is boundedly generated and
could try to approach it by other means.
We state the following problem.

\begin{prob}
Let $R$ be a Dedekind ring of arithmetic type with infinitely many units. Is $\mathrm{St}(\mathsf C_l,\,R)$ boundedly elementarily generated{\rm?}
\end{prob}

\par\smallskip
$\bullet$ Yet another aspect is that Theorem B is
much weaker than Theorem A in that the bound
depends on the size of $\mathrm K_2(R)$. The natural question arises, whether there is a uniform bound in this case
too? However, it seems that an answer to this question
is presently out of range, and in any case should involve
some hard core arithmetic.

\noindent
{\it Acknowledgements.} We are grateful to Nikolai Bazhenov, Sergei Gorchinsky, Alexei Myasnikov, Denis Osipov, Ivan Panin, Victor Selivanov, and Dmitry
Timashev for useful discussions of various aspects
of this work and \cite{KPV2}.

\end{document}